\documentclass[11pt,a4paper,reqno]{amsart}
\usepackage[T1]{fontenc}
\usepackage{lmodern}
\usepackage{amsmath,amssymb,mathtools}
\usepackage[margin=29mm]{geometry}
\usepackage{microtype}
\usepackage{xurl}
\usepackage[hidelinks]{hyperref}
\hypersetup{pdftitle={Packing arithmetic progressions: sharp asymptotics and counterexamples},pdfauthor={Jianfeng Hou, Siyue Liu, Hongbin Zhao},pdfsubject={Resolution of Conjectures 1--6 of Alon, Debski, Grytczuk and Przybylo}}
\newtheorem{theorem}{Theorem}[section]
\newtheorem{corollary}[theorem]{Corollary}
\newtheorem{lemma}[theorem]{Lemma}

\newtheorem{conjecture}[theorem]{Conjecture}
\numberwithin{equation}{section}
\newcommand{\Z}{\mathbb Z}
\newcommand{\N}{\mathbb N}
\newcommand{\cP}{\mathcal P}
\newcommand{\cF}{\mathcal F}
\newcommand{\eps}{\varepsilon}
\DeclareMathOperator{\lcm}{lcm}

\DeclarePairedDelimiter{\ceil}{\lceil}{\rceil}
\DeclarePairedDelimiter{\norm}{\lVert}{\rVert}
\title[Arithmetic Progression Packings]{Simultaneous Residue-Class Selection in Prescribed-Difference Packings}
\author[J. Hou]{Jianfeng Hou}
\author[S. Liu]{Siyue Liu}
\author[H. Zhao]{Hongbin Zhao}
\address{Center for Discrete Mathematics, Fuzhou University, Fuzhou, Fujian 350108, China}
\email[Jianfeng Hou]{jfhou@fzu.edu.cn}
\email[Siyue Liu]{siyue\_ll@163.com}
\email[Hongbin Zhao]{hbzhao2024@163.com}
\date{}
\subjclass[2020]{05B40, 11B25, 11H06, 11N05}
\keywords{Arithmetic progressions, prescribed-difference packing,
residue classes, lattice covering, prime differences.}
\begin{document}
\begin{abstract}

A family $\mathcal F$ of finite subsets of $\mathbb Z$ is packed into $[N]$ if suitable integer translates of its members are pairwise disjoint subsets of $[N]$. We study two prescribed-difference packing problems of Alon, Dębski, Grytczuk and Przybyło for the arithmetic progressions $A_d=\{d,2d,\ldots,\lfloor n/d\rfloor d\}$ and $B_d=\{d,2d,\ldots,nd\}$. Let $m(n)$ and $M(n)$ denote the corresponding minimum packing lengths,
with subscripts indicating restrictions on the admissible differences,
and let $\mathcal P(x)$ denote the set of primes at most $x$.

The key ingredient is a residue-class selection scheme that encodes
pairwise intersection constraints by cyclic intervals. A
lattice-covering argument yields a simultaneous admissible choice,
permitting substantial overlap of the containing intervals and
providing the sharp upper bounds needed for the bounded-diameter
family and the prime-difference equal-cardinality family.

For the bounded-diameter family, we prove
$m(n)\sim m_{\mathcal P(\sqrt n)}(n)\sim 4n^{3/2}/(3\log n)$. For the equal-cardinality family with prime differences, we prove $M_{\mathcal P(n)}(n)\sim n^3/(6\log n)$. For the full equal-cardinality family, we show $M(n)\ge \left(\frac{19}{108}-o(1)\right)n^3/\log n$. Together with corresponding estimates for restricted ranges of differences, these results prove several conjectures of Alon--D\k{e}bski--Grytczuk--Przyby\l{}o and disprove others.
\end{abstract}
\maketitle

\section{Introduction}

For $N\in\N$, write $[N]=\{1,\ldots,N\}$. A finite family
$(C_d)_{d\in D}$ of finite subsets of $\mathbb Z$ is \emph{packed}
into $[N]$ if there are integers $\sigma_d$ such that
$\sigma_d+C_d\subseteq[N]$ for every $d\in D$ and the shifted sets are
pairwise disjoint. Alon, D\k{e}bski, Grytczuk and
Przyby\l{}o~\cite{ADGP} introduced two prescribed-difference variants
of this problem and initiated the systematic study of their extremal
packing lengths.

Fix $n\in\N$. For $d\in[n]$, put
$A_d=\{jd:1\le j\le\lfloor n/d\rfloor\}$, and for $d\in\N$, put
$B_d=\{jd:1\le j\le n\}$. For $D\subseteq[n]$, let $m_D(n)$ be the
least $N$ for which $(A_d)_{d\in D}$ can be packed into $[N]$; write
$m_k(n)=m_{[k]}(n)$ and $m(n)=m_n(n)$. For finite
$D\subseteq\N$, let $M_D(n)$ be the least $N$ for which
$(B_d)_{d\in D}$ can be packed into $[N]$; write
$M_k(n)=M_{[k]}(n)$ and $M(n)=M_n(n)$. We also put
$\cP(x)=\{p\le x:p\text{ prime}\}$ and $\pi(x)=|\cP(x)|$.

The two models arise from packing questions governed by distinct
distance constraints. One source is Cipra's barricade puzzle,
discussed by Guy~\cite{Guy}, which asks for a largest family of
permutations of $[n]$ whose sets of proper partial sums are pairwise
disjoint. The elementary upper bound is $\lfloor n/2\rfloor+1$,
and Alon et al.~\cite{ADGP} conjectured that this bound is sharp.

A more direct source is Johnson's problem of \emph{perfect rhythmic
tilings}, introduced in a 2004 lecture and developed in his later
account~\cite{Johnson2004,Johnson2011}. It asks for a partition of
$[nk]$ into $k$ arithmetic progressions of cardinality $n$ with
distinct positive differences. The prescribed-difference version
$M_k(n)$ requires the differences to be exactly $[k]$. The original
exact tiling problem remains open even for $n=3$~\cite{ADGP}.
For $n=2$, the classical results of Skolem~\cite{Skolem} and
O'Keefe~\cite{OKeefe} give
$M_k(2)=2k$ for $k\equiv0,1\pmod4$ and
$M_k(2)=2k+1$ for $k\equiv2,3\pmod4$.

We first consider the bounded-diameter family $(A_d)$. Alon et
al.~\cite{ADGP} proved
$(4/3-o(1))n^{3/2}/\log n\le m(n)\le
(5/3+o(1))n^{3/2}/\log n$; their lower bound already holds for the
prime differences $\cP(\sqrt n)$. They further proved
$m_k(n)\sim kn/\log k$ when $k\to\infty$ and
$k=o(\sqrt n)$. Motivated by these results, they proposed the
following asymptotics.

\begin{conjecture}[Alon et al.~\cite{ADGP}]
\label{conj:bounded-main}
As $n\to\infty$,
$m(n)\sim m_{\cP(\sqrt n)}(n)
\sim4n^{3/2}/(3\log n)$. Moreover, if
$k=k(n)<\sqrt n$ and $k=\Omega(\sqrt n)$, then
$m_k(n)\sim(1-k^2/(3n))kn/\log k$.
\end{conjecture}

Our first theorem proves all assertions in
Conjecture~\ref{conj:bounded-main} and gives a uniform asymptotic
formula throughout the transition range below $\sqrt n$.

\begin{theorem}\label{thm:bounded}
As $n\to\infty$,
\begin{equation}\label{eq:full-bounded}
m(n)\sim m_{\cP(\sqrt n)}(n)
\sim\frac{4n^{3/2}}{3\log n}.
\end{equation}
For every fixed $c\in(0,1)$, uniformly over integers
$c\sqrt n\le k<\sqrt n$,
\begin{equation}\label{eq:transition}
m_k(n)=
\left(1-\frac{k^2}{3n}+o_c(1)\right)\frac{kn}{\log k}.
\end{equation}
\end{theorem}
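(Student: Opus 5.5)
Both directions are handled separately. The lower bounds come from counting primes, and the upper bounds come from the residue-class selection scheme advertised in the abstract: we place large prime differences by choosing shifts modulo each prime via a lattice-covering argument, and we dispose of composite and small differences at negligible cost.

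\emph{Lower bounds.} For \eqref{eq:full-bounded}, Alon et al.~\cite{ADGP} already give $m_{\cP(\sqrt n)}(n)\ge(4/3-o(1))n^{3/2}/\log n$, so $m(n)\ge m_{\cP(\sqrt n)}(n)$ yields the lower bound. For \eqref{eq:transition} we adapt their argument. Two shifted progressions $\sigma_p+A_p$ and $\sigma_q+A_q$ with distinct primes $p,q$ intersect as soon as their hulls overlap in a window of length at least $pq$, by the Chinese remainder theorem. Hence the primes $p\le k$ whose hulls (each of length about $n$) cover a common point must have, roughly, pairwise hull overlaps shorter than $pq$.

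Sort the primes $p\in(\eps k,k]$ by the left endpoint of their hull. A counting argument then shows that the total span is at least $\sum_p$ (the contribution forced by each prime). This sum is essentially $\sum_{p\le k}(n-p^2/3)$ per unit density; its evaluation $\sum_{p\le k}(n - p^2/3)\approx \frac{k}{\log k}(n-k^2/9)$ is not quite right, so we would recompute the extremal configuration. We expect the weighted count to be $\int_0^k (n-t^2)\,dt/\log k=(kn-k^3/3)/\log k$, which matches $(1-k^2/(3n))kn/\log k$. Concretely, we would show that in any packing the sum over primes $p\le k$ of (span $\ge$ overlap-free portion) is at least $\sum_p n/p\cdot p-\sum_p p^2$-type terms, weighted by prime density $1/\log k$.

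\emph{Upper bounds.} We first place the composites and the primes below $k/\log^2 k$ (or below $\sqrt n/\log^2 n$) disjointly one after another, or inside a short initial block. Their total length is $o(kn/\log k)$, since $\sum_{d\text{ composite}\le k} n/d\cdot d$ is not small — so we must instead pair each composite $d=ab$ with a sub-progression of some prime's class. More precisely, we note that $A_d$ is contained in a residue class mod $d$ and nests inside the complement of prime progressions. The intended scheme is the following.

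\begin{enumerate}
\item Order the primes $p\le k$ decreasingly and assign hulls $[\sigma_p,\sigma_p+n]$. Consecutive hulls overlap as much as the extremal lower-bound configuration allows, giving total length $(1-k^2/(3n)+o(1))kn/\log k$.
\item Choose $\sigma_p \bmod p$ so that every pair whose hulls overlap is disjoint. Encode the constraint for each pair $(p,q)$ as forbidding $\sigma_p-\sigma_q$ from lying in a cyclic interval modulo $pq$ of length proportional to the overlap.
\item Show, via a lattice-covering lemma, that a simultaneous choice of residues avoiding all forbidden intervals exists. The allowed fraction is positive because the overlap lengths satisfy $\sum_q \text{overlap}/(pq)<1$ with slack.
\item Insert the composites and small primes into the residual density. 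Every integer not covered lies in gaps, and a composite $A_d$ of density $1/d$ fits into a free residue class once enough primes have been packed. Alternatively, we reserve an $o$-fraction of extra length.
\end{enumerate}

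The main obstacle is step 3: proving that the residue system with overlapping constraints admits a simultaneous solution when the overlaps are as large as the extremal bound requires. We would try a greedy argument first, choosing $\sigma_p$ in decreasing order of $p$ and counting forbidden residues; where the greedy count fails near the threshold, we would fall back to the lattice-covering argument.
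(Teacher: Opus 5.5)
You have two genuine gaps, both in the upper bound; the lower-bound sketch is fine once tidied. For the lower bound, order the hulls by left endpoint, get $a_{i+1}-a_i\ge n-2k-p_ip_{i+1}$, and use $p_ip_{i+1}\le\frac12(p_i^2+p_{i+1}^2)$; this is Lemma~\ref{lem:bounded-lower}.

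\emph{The premise of step~3 is false.} To realize the savings claimed in step~1, consecutive hosts must overlap in $(1-o(1))p_ip_{i+1}$ integers. So a \emph{single} pairwise constraint already forbids all but an $o(1)$-fraction of the CRT residues modulo $p_ip_{i+1}$. Hence $\sum_q\mathrm{ov}(p,q)/(pq)$ is at least about $2$, not below $1$. Indeed, if it were below $1$ for every $p$, then $\sum_i\mathrm{ov}(p_i,p_{i+1})<\frac12\sum_p p^2$, and you would only reach $(1-k^2/(6n)+o(1))kn/\log k$. Counting only previously placed neighbours does not rescue this. For $p$ near $\sqrt n$, a host of length $n$ meets about $(1-p^2/n)^{-1}$ earlier hosts, and the nearest of these each forbid almost all residues, so greedy counting cannot work. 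Lemma~\ref{lem:phase}\textup{(iii)} helps only when the interacting primes form a regular tuple of controlled size $d$: Minkowski forces $R=O(\sqrt d\,r^{1/d})$, while the lemma needs slack $dr/R$. A decreasing chain of consecutive primes is in fact badly irregular. If $p_{j-1}=p_j+g$ and $p_{j-2}=p_j+g+g'$, then $h=(g,-(g+g'))$ satisfies $h_1p_{j-1}^{-1}+h_2p_{j-2}^{-1}\equiv0\pmod{p_j}$ with $\norm{h}_2=O(g+g')$. The missing idea is the paper's:
\begin{itemize}
\item split each window $(P,(1+\eta)P]$ into $s$ subintervals and match primes across them into strongly regular $s$-tuples (Lemma~\ref{lem:regular-blocks}, via counting bad tuples and a random matching);
\item chain each tuple in its own block, at the cost of the factor $\alpha_{s,\eta}=(s-1)/(s(1+\eta)^2)$;
\item choose the phases inductively, with at most $s-1$ constraints per prime;
\item let $s\to\infty$ and $\eta\downarrow0$ only at the end.
\end{itemize}

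\emph{Step~4 does not work.} Suppose $\gcd(d,p)=1$ and the hull of a copy of $A_d$ shares at least $dp$ consecutive integers with the hull of $\sigma_p+A_p$. Then Lemma~\ref{lem:crt} forces a collision. Since $dp\le k^2<n$ and consecutive hosts overlap almost maximally, a copy of $A_d$ can only sit essentially inside the host of a prime dividing $d$. So there is no ``residual density'' to exploit. Reserving extra length is also too expensive: even grouped by largest prime factor, separate blocks for the composites up to $k$ cost about $n\pi(k/2)\asymp kn/\log k$. The paper puts the differences $mp_i$, $m\le h_i$, into the host of $p_i$, in distinct residue classes modulo $p_i$. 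The key point is the choice of these classes: they are $a_i+m\rho_i$ with $\rho_i\equiv p_jt_{ij}\pmod{p_i}$ and $1\le t_{ij}\le L$ for every $j\ne i$ in the tuple, which is a second use of Lemma~\ref{lem:phase}\textup{(iii)}. Then every possible collision between the hosts of $p_i$ and $p_j$ lies in one short window $[C_{ij},C_{ij}+D]$ modulo $p_ip_j$, so the forbidden set stays a single cyclic interval. This costs $O(HP^2/R(P))$ per tuple, so only $o(R(P))$ multiples per prime can be absorbed this way. The remaining differences need separate accounting:
\begin{itemize}
\item smooth $d$, via $\Psi(k,(\log k)^2)=k^{1/2+o(1)}$;
\item primes $p\le k/(I+1)$ together with their multiples, at cost $n$ per prime or per group of at most $p$ differences;
\item for $m(n)$, everything outside $\cF_\delta$, via the reduction of Alon et al.\ (Lemma~\ref{lem:reduction}). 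This includes all $d>\sqrt n(\log n)^3$, which your plan never addresses.
\end{itemize}
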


\begin{corollary}\label{cor:bounded}
Uniformly for $\sqrt n\le k\le n$,
$m_k(n)\sim4n^{3/2}/(3\log n)$.
\end{corollary}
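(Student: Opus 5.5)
The plan is to sandwich $m_k(n)$ between two quantities that Theorem~\ref{thm:bounded} already pins down, using only monotonicity in the set of differences. If $D\subseteq D'$, then any packing of $(A_d)_{d\in D'}$ restricts to a packing of $(A_d)_{d\in D}$. Hence $m_D(n)\le m_{D'}(n)$ whenever $D\subseteq D'$.

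For $\sqrt n\le k\le n$ we have $\cP(\sqrt n)\subseteq[k]\subseteq[n]$. Monotonicity therefore gives
\[
m_{\cP(\sqrt n)}(n)\le m_k(n)\le m(n).
\]
By \eqref{eq:full-bounded}, both ends of this chain are $(1+o(1))\,4n^{3/2}/(3\log n)$. The $o(1)$ terms depend only on $n$, not on $k$. So for every $\eps>0$ there is $n_0$ such that for all $n\ge n_0$ and all integers $k$ with $\sqrt n\le k\le n$,
\[
\Bigl|m_k(n)\Big/\tfrac{4n^{3/2}}{3\log n}-1\Bigr|<\eps.
\]
This is exactly the uniform asymptotic claimed.

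There is no real obstacle here. The only points to check are that the packing problem is monotone in $D$, and that the uniformity in $k$ comes for free because the two bounding functions do not depend on $k$. For context, the value $k=\sqrt n$ is consistent with the transition formula \eqref{eq:transition}. With $k\approx\sqrt n$ it gives $(1-\tfrac13)\,\sqrt n\, n/(\tfrac12\log n)=4n^{3/2}/(3\log n)$. This continuity is not needed for the argument.
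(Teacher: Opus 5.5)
Your proposal is correct and follows the paper's proof: both use monotonicity to sandwich $m_{\cP(\sqrt n)}(n)\le m_k(n)\le m(n)$ via $\cP(\sqrt n)\subseteq[k]\subseteq[n]$, then invoke \eqref{eq:full-bounded}, with uniformity following because neither outer bound depends on $k$.
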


Thus the leading asymptotic stabilizes at the threshold
$k=\sqrt n$: once $k\ge\sqrt n$, enlarging the set of admissible
differences does not change the leading term.

We next consider the equal-cardinality family $(B_d)$. Alon et
al.~\cite{ADGP} proved
$(1/6-o(1))n^3/\log n\le M(n)\le
(0.526+o(1))n^3/\log n$. They proposed the following asymptotic
behavior for the full family, the prime-difference subfamily, and the
restricted families $(B_d)_{d\le k}$.

\begin{conjecture}[Alon et al.~\cite{ADGP}]
\label{conj:equal-main}
As $n\to\infty$,
$M(n)\sim M_{\cP(n)}(n)\sim n^3/(6\log n)$.
Moreover, suppose that $n,k\to\infty$ with $k<n$. If $k=o(n)$, then
\[
M_k(n)\sim\frac{nk^2}{2\log k},
\]
while otherwise
\[
M_k(n)\sim
\frac{nk^2}{2\log k}-\frac{k^3}{3\log k}.
\]
In particular, along any sequence $k/n\to\lambda\in(0,1)$, the latter
relation becomes
\[
M_k(n)\sim
\left(\frac{\lambda^2}{2}-\frac{\lambda^3}{3}\right)
\frac{n^3}{\log n}.
\]
\end{conjecture}

For each fixed $n$ and all sufficiently large $k$, Alon et
al.~\cite{ADGP} proved $nk\le M_k(n)\le3nk$. Subsequently,
Mao, Wang, Wei and Yang~\cite{MWWY} established $M_k(n)\sim nk$,
thereby settling Conjecture~7 in~\cite{ADGP} in this separate regime.

Our second theorem confirms the prime-difference asymptotic in
Conjecture~\ref{conj:equal-main}, but gives stronger lower bounds
when $k=o(n)$ and along every sequence
$k/n\to\lambda\in(0,1)$.

\begin{theorem}\label{thm:equal}
As $n\to\infty$, $M_{\cP(n)}(n)\sim n^3/(6\log n)$.
If $k\to\infty$ and $k=o(n)$, then
$M_k(n)\ge(9/16-o(1))nk^2/\log k$.
If $k/n\to\lambda\in(0,1)$ and $a=\min\{\lambda/2,1/3\}$, then
\[
M_k(n)\ge
\left(\frac{\lambda^2}{2}-\frac{\lambda^3}{3}
+\frac{a^2}{4}-\frac{a^3}{2}-o(1)\right)
\frac{n^3}{\log n}.
\]
\end{theorem}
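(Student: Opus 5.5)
We prove the three assertions of Theorem~\ref{thm:equal} separately. Each lower bound comes from counting forced collisions. The prime-difference upper bound comes from the residue-class selection scheme described in the abstract.

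\emph{Lower bounds via pair-overlap counting.} Suppose $\sigma_d+B_d\subseteq[N]$ are pairwise disjoint. Let $p<q$ be distinct primes in the family. The progressions $\sigma_p+B_p$ and $\sigma_q+B_q$ intersect exactly when the congruence $\sigma_p+ip\equiv\sigma_q+jq$ has a solution with both indices in $[n]$. Such solutions are spaced by $pq$. Hence if the two progressions overlap in an interval of length greater than $pq$, they must collide. Disjointness therefore forces, for every pair of primes with $pq\lesssim$ (overlap length), a residue condition. Since $(pq)^{-1}$ summed over prime pairs is small, the right way to exploit this is a density argument rather than a union bound.

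The sharp form I would use is this. Place the progressions by starting point. Any prime progression of diameter $np$ forbids, inside its span, a set of positions for the other primes that grows with $n$. Comparing with $N$, one gets $\sum_{p\le x}np\cdot(\text{fraction of span not shared})\le N$.

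For the prime family, the simple volume bound $N\ge\max_p np$ is far too weak. I would instead use the argument of Alon et al.: a point $t\in[N]$ lies in the span of many progressions, and the progressions covering $t$ must have pairwise coprime ``residue demands''. I expect (and would check against \cite{ADGP}) that the averaged count gives $\int$ over positions of the number of spans containing a point. This is bounded by $N$ times roughly $\log N$ divided by the typical size of the differences. That recovers $N\ge(1/6-o(1))n^3/\log n$, which is the matching lower bound for $M_{\cP(n)}(n)$.

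For the $M_k$ bounds, I would refine this by splitting differences into a large block $d\in(k/2,k]$ (or $(ak',k]$) and a small block. The small-difference progressions must also avoid being overlapped by the large ones. This produces the extra terms $a^2/4-a^3/2$ after optimizing the split point $a=\min\{\lambda/2,1/3\}$, and the constant $9/16>1/2$ when $k=o(n)$. Concretely, I would optimize over a threshold $\theta k$ and compute $\int$ of the overlap cost. The maximization should give $\theta=1/2$ and the stated constants.

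\emph{Upper bound for primes.} Here I would use the residue-class selection lemma announced in the abstract. Order the primes $p\le n$ and place progression $B_p$ in a window of length about $np$. Consecutive windows are allowed to overlap heavily, so that the total length is
\[
\sum_{p\le n}\frac{np}{\text{(average multiplicity)}}\approx\frac{n^3}{6\log n}.
\]
Each pair $p<q$ with overlapping windows imposes that $\sigma_p-\sigma_q$ avoid a cyclic interval of residues modulo $pq$ (in fact, modulo $\gcd$-related classes, by CRT). The lattice-covering argument then produces a simultaneous choice of all $\sigma_p$ satisfying every constraint, provided the total forbidden density per prime is below $1$. The window geometry must be tuned so that this density tends to the threshold exactly when the length is $(1+o(1))n^3/(6\log n)$.

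The main obstacle I expect is this last step. One has to verify that the cumulative forbidden measure
\[
\sum_{q}\frac{\text{overlap}(p,q)}{pq}
\]
stays $<1$ uniformly in $p$ while the windows overlap as densely as the lower bound allows. The prime number theorem, applied to sums of $1/q$ over short ranges, is where the constant $1/6$ must emerge.
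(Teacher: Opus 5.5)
Your proposal has genuine gaps in all three parts; the two that matter most are the mechanism behind $9/16$ and $a^2/4-a^3/2$, and the phase-selection criterion for the upper bound.

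\textbf{Lower bounds.} You have the right seed: two prime progressions whose hulls share at least $pq$ consecutive integers must collide, by the Chinese remainder theorem. (Such an overlap is simply impossible; it does not merely impose a residue condition.) But the span-multiplicity and $\log N$ accounting you sketch does not produce $1/6$. What does is a chain bound. Sort the hulls $I_d=[\sigma_d+d,\sigma_d+nd]$ by left endpoint, and suppose $D\subseteq[n-1]$ and $\sigma_d\equiv\sigma_e\pmod{\gcd(d,e)}$ for all pairs. Then adjacent hulls meet in fewer than $\lcm(d_i,d_{i+1})\le d_id_{i+1}<|I_{d_{i+1}}|$ points, so the right endpoints also increase. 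Hence $N\ge\sum_i|I_{d_i}|-\sum_i d_id_{i+1}\ge\sum_{d\in D}(nd-d^2-d+1)$, using $2d_id_{i+1}\le d_i^2+d_{i+1}^2$.

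For $M_k(n)$, your large/small split cannot reach the stated constants. The prime subfamily always packs by concatenation in length $n\sum_{p\le k}p\sim nk^2/(2\log k)$. So no argument based on forced collisions among primes beats $1/2$ when $k=o(n)$, and composite differences force nothing unless their shifts agree modulo the gcd. The missing idea is a parity-compatible selection. For $\theta\in\{0,1\}$ and each odd prime $p\le K$, keep $2p$ if $p\le x$ and $\sigma_{2p}\equiv\theta\pmod 2$, and keep $p$ otherwise. Each such family is phase-compatible, so the chain bound applies. Averaging over $\theta$ adds $\frac12\sum_{p\le x}\bigl((2np-4p^2)-(np-p^2)\bigr)=\frac12\sum_{p\le x}(np-3p^2)$ to the prime bound. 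Taking $x=k/2$ gives $9/16=1/2+1/16$, and taking $x=\min\{k/2,n/3\}$ gives $a^2/4-a^3/2$. The value $a=\min\{\lambda/2,1/3\}$ just records where $2p\le k$ or $np-3p^2>0$ stops holding; it is not an optimized split.

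\textbf{Upper bound.} The condition you plan to verify, $\sum_q\mathrm{overlap}(p,q)/(pq)<1$, fails in every arrangement attaining $1/6$, so no density or union-bound choice of phases can work. Attaining length $n\sum p-(1-o(1))\sum p^2$ forces, for most $i$, adjacent hull overlaps close to $p_ip_{i+1}$, with adjacent primes of nearly equal size. Then the predecessor alone forbids a $1-o(1)$ fraction of the phases of $p_{i+1}$. Moreover, for primes above $n/2$, which carry $7/8$ of $\sum p^2$, the hull of $p_{i+2}$ typically also meets that of $p_i$ (roughly when $p_i+p_{i+2}>n$). This adds a second constraint of positive density.

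The paper's remedy is structural rather than measure-theoretic:
\begin{enumerate}
\item It groups the primes of each window $(P,(1+\eta)P]$ into strongly regular $s$-tuples.
\item Inside a tuple, each earlier prime $p_i$ confines the parameter $\tau_i$ in $a_j\equiv\widetilde a_i+p_i\tau_i\pmod{p_j}$ to a cyclic interval of length only $O_{s,\eta}(P/R(P))=o(p_j)$.
\item A simultaneous choice still exists. For the lattice $\Lambda=\{t\in\Z^d:q_it_i\equiv q_1t_1\pmod r\}$, regularity gives $\lambda_1(\Lambda^*)\ge R/r$. By transference, every box with half-sides exceeding $dr/(2R)$ then meets each coset of $\Lambda$.
\item Each tuple costs $n\sum_ip_i-\sum_{i<s}p_ip_{i+1}+O(P^2/R(P))$, and the tuples are concatenated.
\item Taking the limits $J\to\infty$, $s\to\infty$, $\eta\downarrow0$ (so $\alpha_{s,\eta}\to1$) gives $1/2-1/3=1/6$.
\end{enumerate}
Note also that the saving is the sum of adjacent products $p_ip_{i+1}$, matching the chain lower bound. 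It is not $\sum np$ divided by an average multiplicity.
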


The additional term equals $\lambda^2(1-\lambda)/16$ for
$0<\lambda\le2/3$ and $1/108$ for $2/3\le\lambda<1$.
Consequently, the two predictions for $M_k(n)$ in
Conjecture~\ref{conj:equal-main} are false. The following consequence
also disproves the asserted asymptotic for the full family.

\begin{corollary}\label{cor:equal}
As $n\to\infty$,
$M(n)\ge(19/108-o(1))n^3/\log n$.
\end{corollary}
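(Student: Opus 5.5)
This corollary follows from Theorem~\ref{thm:equal} once we compare $M(n)$ with a restricted family. Since $M(n)=M_n(n)$ is the minimal packing length for $(B_d)_{d\in[n]}$, and any packing of a family also packs every subfamily, we have $M(n)\ge M_k(n)$ for every $k\le n$. The plan is therefore to choose $k$ proportional to $n$, apply the third assertion of Theorem~\ref{thm:equal}, and optimise the constant over $\lambda$.

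Fix $\lambda\in(0,1)$ and set $k=\lfloor\lambda n\rfloor$, so that $k/n\to\lambda$. Theorem~\ref{thm:equal} then gives
\[
M(n)\ge M_k(n)\ge\Bigl(f(\lambda)-o(1)\Bigr)\frac{n^3}{\log n},
\qquad
f(\lambda)=\frac{\lambda^2}{2}-\frac{\lambda^3}{3}+\frac{a^2}{4}-\frac{a^3}{2},
\]
where $a=\min\{\lambda/2,1/3\}$.

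It remains to find the supremum of $f$ on $(0,1)$. Following the remark after Theorem~\ref{thm:equal}, we split into two ranges.

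\begin{itemize}
\item For $\lambda\ge 2/3$ we have $a=1/3$, and the extra term is $1/36-1/54=1/108$. The main part $\lambda^2/2-\lambda^3/3$ has derivative $\lambda(1-\lambda)>0$, so it is increasing and tends to $1/6$ as $\lambda\to1$. Hence $f(\lambda)\to 1/6+1/108=19/108$.
\item For $\lambda\le 2/3$ we have $a=\lambda/2$, and $f(\lambda)=\lambda^2/2-\lambda^3/3+\lambda^2(1-\lambda)/16$. This is increasing on this range, so $f(\lambda)\le f(2/3)<19/108$.
\end{itemize}

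Thus $\sup_{\lambda\in(0,1)}f(\lambda)=19/108$, approached as $\lambda\to1^-$ but not attained.

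The only delicate point is that Theorem~\ref{thm:equal} is stated for $\lambda\in(0,1)$ fixed, so we cannot take $\lambda=1$ directly. We handle this with a standard diagonal argument. Given $\eps>0$, choose $\lambda<1$ with $f(\lambda)>19/108-\eps/2$. Applying the theorem along $k=\lfloor\lambda n\rfloor$ gives $M(n)\ge(19/108-\eps)n^3/\log n$ for all large $n$. Since $\eps$ is arbitrary, this yields $M(n)\ge(19/108-o(1))n^3/\log n$. No step here is a real obstacle; all the difficulty lies in Theorem~\ref{thm:equal} itself.
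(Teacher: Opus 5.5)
Your proposal is correct and follows the paper's proof: monotonicity $M(n)\ge M_{\lfloor\lambda n\rfloor}(n)$, the third assertion of Theorem~\ref{thm:equal} with $a=1/3$ for $\lambda\ge2/3$, and then letting $\lambda\uparrow1$ after taking the limit inferior. Your check that $f$ is increasing on $(0,2/3]$ with $f(2/3)<19/108$ is accurate but not needed, since only a lower bound approaching $19/108$ is required.
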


Thus the prime-difference asymptotic does not extend to the full
equal-cardinality family: already the pairs $p,2p$ produce a positive
correction at the leading-order scale. This contrasts sharply with
the bounded-diameter problem, where the prime-difference subfamily
already determines the full leading asymptotic.

% These packing problems are also related to classical union problems.
% The multiplication-table problem concerns the order of
% $|\bigcup_{d\le n}B_d|$~\cite{Erdos,Ford}, while the arithmetic
% Kakeya problem asks to minimize
% $|\bigcup_{d\le k}(s_d+B_d)|$~\cite{KatzTao,Ruzsa,GreenRuzsa}.
% Our setting differs in requiring the translated copies to be pairwise
% disjoint and in minimizing the length of a containing interval.
These packing problems are also related to classical union problems
in additive and multiplicative combinatorics. The multiplication-table
problem concerns the order of
$|\bigcup_{d\le n}B_d|$~\cite{Erdos,Ford}, while the arithmetic
Kakeya problem asks to minimize
$|\bigcup_{d\le k}(s_d+B_d)|$~\cite{GreenRuzsa,KatzTao,Ruzsa}.
Our setting differs in requiring the translated copies to be pairwise
disjoint and in minimizing the length of a containing interval.

We briefly indicate the proof strategy. The matching upper bounds for
the bounded-diameter family and the prime-difference equal-cardinality
family use a common residue-class selection scheme. Pairwise
intersection constraints are encoded by cyclic intervals, and the
Chinese remainder theorem together with a lattice-covering estimate
yields a simultaneous admissible choice after grouping primes into
strongly regular tuples. In the bounded-diameter problem, the same
construction also accommodates small multiples of the primes.

Alon et al.~\cite{ADGP} had already established a lower bound with
leading constant $4/3$ for the bounded-diameter problem and one with
leading constant $1/6$ for the full equal-cardinality family; the
former is already forced by the prime-difference subfamily. Our
stronger lower bounds in the equal-cardinality problem arise from
parity-compatible selections from the pairs $p,2p$.

Section~\ref{sec:prelim} collects the notation and auxiliary results.
Sections~\ref{sec:bounded} and~\ref{sec:equal} give the proofs for the
bounded-diameter and equal-cardinality problems, respectively.

\section{Notation and preliminaries}\label{sec:prelim}

We retain the families $A_d,B_d$ and packing parameters $m_D(n),M_D(n)$
defined in the introduction. Write $\N=\{1,2,\ldots\}$, and let $\Z$
denote the integers. All logarithms are natural, and $p,q$ always denote
primes. Integer intervals are measured by cardinality, so
$|[a,b]|=b-a+1$ for integers $a\le b$; the integer hull of a nonempty
finite set $S\subset\Z$ is $[\min S,\max S]$.
For $x\ge0$, $\cP(x)=\{p\le x:p\text{ prime}\}$ and
$\pi(x)=|\cP(x)|$. We write $f\sim g$ if $f/g\to1$ and
$f\asymp g$ if $f=O(g)$ and $g=O(f)$; subscripts on asymptotic
notation indicate dependence on fixed parameters.
In the main results $n\to\infty$ and $k\le n$; auxiliary parameters
are fixed before the principal variable tends to infinity.

\begin{lemma}[{Shoup~\cite{Shoup}}]\label{lem:crt}
Let $d,e\in\N$ and $a,b\in\Z$. The congruences
$z\equiv a\pmod d$ and $z\equiv b\pmod e$ have a common solution
if and only if $a\equiv b\pmod{\gcd(d,e)}$.
When soluble, their solutions form a single residue class modulo
$L=\lcm(d,e)$; hence every interval of $L$ consecutive integers
contains exactly one solution.
\end{lemma}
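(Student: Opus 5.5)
The plan is the standard generalized Chinese remainder argument in two directions: first necessity, then existence and uniqueness modulo $L$. Put $g=\gcd(d,e)$ and $L=\lcm(d,e)$, and recall $dL=de\cdot(L/e)$, or more usefully $de=gL$.

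\emph{Necessity.} If $z\equiv a\pmod d$ and $z\equiv b\pmod e$, then, since $g$ divides both $d$ and $e$, reducing modulo $g$ gives $a\equiv z\equiv b\pmod g$.

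\emph{Sufficiency.} Suppose $a\equiv b\pmod g$. By B\'ezout there are integers $u,v$ with $ud+ve=g$. Write $b-a=tg$ with $t\in\Z$, and set
\[
z_0=a+tud.
\]
Then $z_0\equiv a\pmod d$. Moreover $z_0=a+t(g-ve)=b-tve$, so $z_0\equiv b\pmod e$.

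\emph{Structure of the solution set.} Let $z$ be any solution. Then $z-z_0$ is divisible by both $d$ and $e$, hence by $L$, so $z\equiv z_0\pmod L$. Conversely, every $z\equiv z_0\pmod L$ is a solution, because $d\mid L$ and $e\mid L$. Hence the solutions form exactly one residue class modulo $L$.

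\emph{Counting in intervals.} Any $L$ consecutive integers form a complete residue system modulo $L$, so they contain exactly one element of this class.

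None of these steps is a real obstacle. The only point needing care is the fact that a common multiple of $d$ and $e$ is a multiple of $L$. If it is not taken as known, it follows from dividing by $g$: $d/g$ and $e/g$ are coprime, so any common multiple is divisible by $g\cdot (d/g)(e/g)=de/g=L$.
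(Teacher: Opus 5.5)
Your proof is correct and complete: you get necessity by reducing modulo $\gcd(d,e)$, existence by B\'ezout, uniqueness modulo $\lcm(d,e)$ because every common multiple is a multiple of the lcm, and the interval count from the fact that $L$ consecutive integers form a complete residue system. The paper gives no proof of this lemma and simply cites Shoup, and your B\'ezout-based argument is the standard textbook proof behind that citation.
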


\begin{lemma}[{Montgomery--Vaughan~\cite{MV};
Banaszczyk~\cite{Banaszczyk}}]\label{lem:standard}
The prime number theorem and partial summation give
\begin{equation}\label{eq:moments}
\sum_{p\le x}p^j\sim\frac{x^{j+1}}{(j+1)\log x}
\qquad(x\to\infty,\ j=0,1,2).
\end{equation}
The odd-prime sums $S_j(x)=\sum_{p\le x,\ p\text{ odd}}p^j$,
$j=1,2$, have the same asymptotics.

For $v\in\mathbb R^d$, let $\norm v_2=\sqrt{\langle v,v\rangle}$.
For a full-rank lattice $\Lambda\subset\mathbb R^d$, define
$\Lambda^*=\{y\in\mathbb R^d:\langle y,\Lambda\rangle\subseteq\Z\}$,
$\lambda_1(\Lambda)=\min\{\norm v_2:0\ne v\in\Lambda\}$ and
$\rho(\Lambda)=\sup_{x\in\mathbb R^d}\inf_{v\in\Lambda}\norm{x-v}_2$.
The Euclidean transference inequality states that
\begin{equation}\label{eq:transference}
\rho(\Lambda)\lambda_1(\Lambda^*)\le d/2.
\end{equation}
\end{lemma}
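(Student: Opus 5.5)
The lemma collects two classical facts, and I would treat them separately: the moment asymptotics \eqref{eq:moments} follow by partial summation from the prime number theorem, and the transference inequality \eqref{eq:transference} I would take from Banaszczyk's theorem rather than reprove it.

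For \eqref{eq:moments}, fix $j\in\{0,1,2\}$. The case $j=0$ is the prime number theorem $\pi(x)\sim x/\log x$. For $j\ge1$, Abel summation gives
\[
\sum_{p\le x}p^j=x^j\pi(x)-j\int_2^x t^{j-1}\pi(t)\,dt .
\]
The boundary term is $(1+o(1))x^{j+1}/\log x$. For the integral, insert $\pi(t)=(1+o(1))t/\log t$ and split the range at $t=x/\log^2x$. On $[2,x/\log^2 x]$ the trivial bound $\pi(t)\le t$ shows the contribution is $O(x^{j+1}/\log^{2(j+1)}x)=o(x^{j+1}/\log x)$. On $[x/\log^2x,x]$ we have $\log t=(1+o(1))\log x$, so the contribution is $(1+o(1))\,j\,x^{j+1}/((j+1)\log x)$. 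Subtracting gives
\[
\Bigl(1-\tfrac{j}{j+1}\Bigr)\frac{x^{j+1}}{\log x}=\frac{x^{j+1}}{(j+1)\log x}.
\]
The odd-prime sums $S_j(x)$ differ from the full sums only by the term $2^j$, which is $O(1)$. So $S_j(x)$ has the same asymptotics.

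For \eqref{eq:transference}, I would quote Banaszczyk's transference theorem in the form
\[
\mu(\Lambda)\,\lambda_1(\Lambda^*)\le d/2,
\]
where $\mu(\Lambda)=\rho(\Lambda)$ is the covering radius. The only step needing care is checking that the paper's conventions match Banaszczyk's. In particular, the dual must be defined with the pairing $\langle y,\Lambda\rangle\subseteq\Z$ and no factor $2\pi$; this is the convention both here and in Banaszczyk's statement.

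I regard this citation as the main (and only nontrivial) input. If a self-contained argument were wanted, I would first try the elementary route. Take a dual vector $y$ with $\norm{y}_2=\lambda_1(\Lambda^*)$. Then $\Lambda$ lies in the family of parallel hyperplanes $\{\langle y,\cdot\rangle\in\Z\}$, whose spacing is $1/\lambda_1(\Lambda^*)$. A point midway between two consecutive hyperplanes is at distance $\ge 1/(2\lambda_1(\Lambda^*))$ from $\Lambda$. This only gives the opposite inequality $\rho\lambda_1^*\ge1/2$, so it does not prove \eqref{eq:transference}.

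The upper bound genuinely needs Banaszczyk's Gaussian-measure argument. The steps there are: define the theta function $\sum_{v\in\Lambda}e^{-\pi\norm{x-v}^2}$, apply Poisson summation to compare it with the dual sum, and use the tail bound for discrete Gaussians outside a ball of radius $\sqrt{d}/\sqrt{2\pi}$ times a scale. Alternatively, a weaker polynomial bound of the form $O(d^{3/2})$ could be obtained by a Korkine–Zolotarev basis argument. Since only some bound polynomial in $d$ seems to matter for a covering estimate with fixed dimension, the weaker version would likely also suffice downstream. I would nonetheless cite the sharp form as stated.
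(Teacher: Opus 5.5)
Your proposal is correct and matches the paper, which gives no separate argument. It derives \eqref{eq:moments} from the prime number theorem by partial summation, where your split at $x/\log^2 x$ and the remark that $S_j$ differs from the full sum by $2^j=O(1)$ are exactly what is needed, and it quotes \eqref{eq:transference} from Banaszczyk with the same dual-lattice convention. Your side remark is also sound: the inequality is only used in Lemma~\ref{lem:phase}\textup{(iii)} in dimension at most $s-1$ with $s$ fixed, so any transference constant depending only on $d$ would suffice after enlarging $L$ by an $O_s(1)$ factor.
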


For completeness, we record the following finite form of the lower-bound
estimate, derived from the argument of Alon et al.~\cite{ADGP}.

\begin{lemma}
\label{lem:bounded-lower}
For $2\le u\le\sqrt n$,
\begin{equation}\label{eq:bounded-lower}
m_{\cP(u)}(n)\ge n\pi(u)-\sum_{p\le u}p^2-2u\pi(u)-n.
\end{equation}
\end{lemma}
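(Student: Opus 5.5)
The plan is to bound the packing length from below by counting points that the shifted progressions are forced to leave uncovered. The uncovered points come from pairwise interactions between the progressions for distinct primes.

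Fix a packing $\sigma_p+A_p\subseteq[N]$, $p\in\cP(u)$, and write $k_p=\lfloor n/p\rfloor$, so $|A_p|=k_p\ge n/p-1$. Each shifted set lies in the interval
\[
I_p=[\sigma_p+p,\ \sigma_p+k_pp],
\]
which has length at most $n$ and at least $n-p$. Order the primes by the left endpoints of these intervals.

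The key observation concerns two primes $p<q$ whose intervals overlap in a stretch $J$ of length $\ell$. Since $\gcd(p,q)=1$, Lemma~\ref{lem:crt} shows that the two residue classes meet exactly once in every $pq$ consecutive integers. Disjointness therefore forces $\ell<pq+q$, roughly, because any window of length $pq$ lying inside both hulls would contain a common point. Since $pq\le u^2\le n$ and every $I_p$ has length close to $n$, the intervals cannot stack heavily.

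Rather than using overlaps, the more robust route is a density count. Take the union of the $I_p$ and, at each point $x$, count how many progressions pass through it. For the subfamily $\cP(u)$ we have
\[
\sum_p|A_p|\ \ge\ \sum_p\Bigl(\frac np-1\Bigr)\ \ge\ n\sum_p\frac1p-\pi(u),
\]
and this alone gives only $\sim n\log\log u$. So the bound \eqref{eq:bounded-lower}, of order $n\pi(u)$, must come from the hull geometry and not from cardinality.

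The main argument is a sweep. Order the primes by $\min(\sigma_p+A_p)$ and consider consecutive primes $p$ and $p'$ in this order. Claim: $\min I_{p'}-\min I_p\ \ge\ n-(\text{something of size }\le p^2+2u)$. If this holds, telescoping over the $\pi(u)$ primes gives
\[
N\ \ge\ \max I_{\text{last}}-\min I_{\text{first}}+1\ \ge\ (\pi(u)-1)\,n-\sum_p\bigl(p^2+2u\bigr),
\]
and the terms $-n$ and $-2u\pi(u)$ in \eqref{eq:bounded-lower} absorb the boundary losses, including the gap between $n$ and $|I_p|\ge n-p$.

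Proving the claim requires a finer CRT argument than the one above. If $I_{p'}$ started within $n-p^2-2u$ of $\min I_p$, the two intervals would overlap in more than about $p^2+p'$ points. The naive bound only forbids overlaps longer than about $pp'$, which may exceed $p^2$ when $p'>p$. I would therefore arrange the ordering so that the smaller prime is the one charged. Concretely, I would charge each overlap $I_p\cap I_q$ with $p<q$ at most $pq+p+q$. Then I would argue that in an interval-graph sweep each interval overlaps essentially only its successor by a large amount, because the lengths are all near $n$ and overlaps are at most $u^2\le n$.

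Summing, the total loss is $\sum\bigl(\text{overlap of consecutive pairs}\bigr)$. Bounding the overlap of a consecutive pair by $\min(p,q)\cdot\max(p,q)$ gives the cruder $\sum pq$. To reach $\sum p^2$ I would instead pair primes in increasing order of size, using the freedom to choose which prime is charged to each overlap, or use the estimate $pq\le(p^2+q^2)/2$ and note that each prime participates in at most two consecutive overlaps. This yields exactly $\sum_{p}p^2$ plus linear error terms.

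I expect the main obstacle to be this last step. It requires making rigorous that at most two intervals overlap each point substantially, together with the bookkeeping of the $\pm p,\pm q$ endpoint effects into the $2u\pi(u)+n$ slack. The near-equality of the hull lengths, all between $n-u$ and $n$, is what makes the at-most-two-overlap structure work. I would verify the step $q\le u$ so that $pq\le u^2\le n$, which prevents three hulls from sharing a point beyond a negligible amount.
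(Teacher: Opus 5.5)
Your eventual route is exactly the paper's proof: order the hulls $I_p$ by left endpoint, bound each consecutive overlap via Lemma~\ref{lem:crt}, telescope $N\ge a_t-a_1=\sum_{i<t}\Delta_i$, and finish with $p_ip_{i+1}\le\frac12(p_i^2+p_{i+1}^2)$. With correct bookkeeping it yields \eqref{eq:bounded-lower}, but several auxiliary claims in your plan are false.

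The hull length is $(\lfloor n/p\rfloor-1)p+1$. This is only $\ge n-2p+1$, not $\ge n-p$: for $n=11$, $p=3$ it equals $7$. So the entire slack of $2u$ per step is used up by hull deficits, and you need the exact bound $|I_p\cap I_q|<pq$ rather than ``$pq+q$, roughly''. Your window argument already gives this exact bound, because $\sigma_p+A_p=I_p\cap(\sigma_p+p\Z)$.

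The per-pair claim $\Delta\ge n-p^2-2u$ is also false in general. You cannot rearrange the order to charge the smaller prime, since the order is dictated by the packing. Only the summed AM--GM bound is valid.

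The step you single out as the main obstacle is both false and unnecessary. When $u$ is close to $\sqrt n$, the products $p_ip_{i+1}$ can be close to $n$. Consecutive left endpoints may then lie far closer than $n/2$ apart, and three or more hulls can share a stretch of length comparable to $n$. The near-optimal packings of Lemma~\ref{lem:blocks} do exactly this, and $pq\le u^2\le n$ does not prevent it.

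Nothing of this sort is needed. Since $\ell_i,\ell_{i+1}\ge n-2u$ and $\Delta_i\ge0$, one has $|I_{p_i}\cap I_{p_{i+1}}|\ge\min\{\ell_i-\Delta_i,\ell_{i+1}\}\ge n-2u-\Delta_i$, even when one hull is nested in the other. Hence $\Delta_i\ge n-2u-p_ip_{i+1}$. The telescoping sum involves only consecutive left endpoints, so overlaps of non-consecutive hulls never enter.
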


\begin{proof}
Let $(\sigma_p+A_p)_{p\le u}$ be an arbitrary packing in $[N]$ and
let $I_p$ be the integer hull of $\sigma_p+A_p$. Since
$A_p=\{p,2p,\ldots,\lfloor n/p\rfloor p\}$, we have
$I_p=[\sigma_p+p,\sigma_p+\lfloor n/p\rfloor p]$ and
$\sigma_p+A_p=I_p\cap(\sigma_p+p\Z)$. Moreover,
$|I_p|=(\lfloor n/p\rfloor-1)p+1\ge n-2p+1\ge n-2u$.
For distinct primes $p,q$, Lemma~\ref{lem:crt} gives one simultaneous
solution of $z\equiv\sigma_p\pmod p$ and
$z\equiv\sigma_q\pmod q$ in every interval of $pq$ consecutive
integers. Hence $|I_p\cap I_q|\ge pq$ would produce an element of both
$\sigma_p+A_p$ and $\sigma_q+A_q$, contrary to the packing property.
Thus $|I_p\cap I_q|<pq$ for $p\ne q$.

Write the hulls as $I_{p_i}=[a_i,b_i]$, ordered so that
$a_1\le\cdots\le a_t$, where $t=\pi(u)$. Put
$\ell_i=b_i-a_i+1$ and $\Delta_i=a_{i+1}-a_i$ for $i<t$. Since
$a_i\le a_{i+1}$,
\[
 |I_{p_i}\cap I_{p_{i+1}}|
 =\max\{0,\min\{\ell_i-\Delta_i,\ell_{i+1}\}\}
 \ge n-2u-\Delta_i.
\]
Together with the strict overlap bound this implies
$\Delta_i\ge n-2u-p_ip_{i+1}$. Also,
\[
 \sum_{i<t}p_ip_{i+1}
 \le\frac12\sum_{i<t}(p_i^2+p_{i+1}^2)
 \le\sum_{i=1}^t p_i^2.
\]
As every $I_{p_i}$ lies in $[N]$, we have
$N\ge a_t-a_1=\sum_{i<t}\Delta_i$. Therefore
\begin{align*}
N&\ge(t-1)(n-2u)-\sum_{i<t}p_ip_{i+1}\\
&\ge(n-2u)(\pi(u)-1)-\sum_{p\le u}p^2
\ge n\pi(u)-\sum_{p\le u}p^2-2u\pi(u)-n,
\end{align*}
which proves \eqref{eq:bounded-lower}.

We record the two asymptotic forms used later. If $u=\sqrt n$, then
$\log u=\frac12\log n$, and \eqref{eq:moments} gives
\begin{align*}
n\pi(u)&=(2+o(1))\frac{n^{3/2}}{\log n},&
\sum_{p\le u}p^2&=\left(\frac23+o(1)\right)
\frac{n^{3/2}}{\log n},\\
2u\pi(u)&=(4+o(1))\frac n{\log n}
=o\left(\frac{n^{3/2}}{\log n}\right),&
n&=o\left(\frac{n^{3/2}}{\log n}\right).
\end{align*}
Substitution in \eqref{eq:bounded-lower} yields
\[
m_{\cP(\sqrt n)}(n)\ge
\left(2-\frac23-o(1)\right)\frac{n^{3/2}}{\log n}
=\left(\frac43-o(1)\right)\frac{n^{3/2}}{\log n}.
\]
Now fix $c\in(0,1)$ and suppose $c\sqrt n\le k<\sqrt n$. Since
$k\ge c\sqrt n\to\infty$, the error terms in \eqref{eq:moments} are
uniform over this range. With $u=k$ they give
\begin{align*}
n\pi(k)&=(1+o_c(1))\frac{nk}{\log k},\\
\sum_{p\le k}p^2
&=\left(\frac13+o_c(1)\right)\frac{k^3}{\log k}
=\left(\frac{k^2}{3n}+o_c(1)\right)\frac{nk}{\log k}.
\end{align*}
Moreover,
\[
\frac{2k\pi(k)+n}{nk/\log k}
=(2+o_c(1))\frac{k}{n}+\frac{\log k}{k}=o_c(1),
\]
because $k/n\le n^{-1/2}$ and
$\log k/k\le c^{-1}\log n/\sqrt n$. Substitution in
\eqref{eq:bounded-lower} therefore yields, uniformly in $k$,
\begin{equation*}
m_{\cP(k)}(n)\ge
\left(1-\frac{k^2}{3n}-o_c(1)\right)\frac{nk}{\log k}
\qquad(c\sqrt n\le k<\sqrt n). 
\qedhere
\end{equation*}
\end{proof}

\begin{lemma}[{Alon et al.~\cite{ADGP}}]
\label{lem:reduction}
For fixed $0<\delta<1$, put $X=\sqrt n(\log n)^3$ and
$\cF_\delta=\bigcup_{\delta\sqrt n<p\le\sqrt n}
\{mp:1\le m\le\lfloor X/p\rfloor\}$.
Then, as $n\to\infty$,
\begin{equation}\label{eq:remainder}
m_{[n]\setminus\cF_\delta}(n)
\le n\pi(\delta\sqrt n)+o_\delta(n^{3/2}/\log n)
=\bigl(2\delta+o_\delta(1)\bigr)\frac{n^{3/2}}{\log n}.
\end{equation}
\end{lemma}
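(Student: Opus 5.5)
The plan is to pack the progressions in about $\pi(\delta\sqrt n)$ consecutive blocks of length $n+O(\sqrt n)$, one block per prime $p\le\delta\sqrt n$. Each $A_d$ with $d\notin\cF_\delta$ either goes into one of these blocks or into a short tail region of length $o_\delta(n^{3/2}/\log n)$. The equality on the right of \eqref{eq:remainder} is then just \eqref{eq:moments} with $j=0$ and $x=\delta\sqrt n$, since $\log(\delta\sqrt n)\sim\frac12\log n$.

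The basic device is a residue shift. If $p\mid d$, then $A_d\subseteq p\Z$. Hence $r+A_d$ and $r'+A_{d'}$ are disjoint whenever $p\mid d$, $p\mid d'$ and $r\not\equiv r'\pmod p$. In the block of $p$, I would place $A_p$ at residue $0$ and some multiples $A_{mp}$ at distinct nonzero residues modulo $p$. All of these lie in a window of length at most $n+p$, so a block holds up to $p$ progressions whose differences are multiples of $p$. Every $d\le X$ with $d\notin\cF_\delta$ has a prime factor $p\le\delta\sqrt n$ or $p>\sqrt n$. This is because a prime factor $p\in(\delta\sqrt n,\sqrt n]$ would give $d=mp$ with $m\le X/p$, so $d\in\cF_\delta$. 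I would assign each such $d\le X$ with a factor $p\le\delta\sqrt n$ to its largest such factor $p$. The block of $p$ must then absorb at most $p-1$ multiples, which holds automatically for $p\ge\sqrt X$. For smaller $p$, the assigned $d=mp$ have $m$ whose prime factors are all at most $p$ or larger than $\sqrt n$. A smooth-number (de Bruijn--type) count should show that these are few. Any overflow, together with all $d$ having only prime factors $>\sqrt n$, goes to the tail.

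For the tail I would use a greedy packing. Every $A_d$ with $d>\sqrt n/(\log n)^{C}$ has at most $(\log n)^{C}\sqrt n$ elements, and the number of $d\le X$ sent to the tail is $o(\sqrt n/\log n)$ by the counting above. I would place the remaining sets one after another in a fresh interval, choosing for each new set the least shift that avoids the already occupied points. Since each $A_d$ is a progression, the forbidden shifts are residues modulo $d$ intersected with an interval. If $T$ points are occupied and the new set has $k$ elements, at most $kT$ shifts are forbidden. Alternatively, the large-$d$ sets can be placed one per residue class inside blocks of length $n$, since each occupies only about $n/d$ points. Summing $n/d$ over $X<d\le n$ gives $O(n\log n)$ points in total, so these sparse sets should fit into $O((\log n)^{O(1)})$ extra windows of length $2n$ by a first-fit argument, at a cost of $o(n^{3/2}/\log n)$.

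I expect the main obstacle to be the bookkeeping for the small primes $p<\sqrt X$ and for the smooth or composite $d\le X$. Concretely, one must show that the number of $d\le X$ that cannot be assigned to a block with spare residues, plus the total cost of the sparse large-$d$ sets, is $o_\delta(\sqrt n/\log n)$ windows of length $O(n)$. I would first try assigning each $d$ to its largest prime factor below $\delta\sqrt n$, bounding the overflow with Rankin's trick. If that is too lossy, I would also allow $d$ to use any residue class modulo any prime factor of $d$, and argue by a Hall-type matching that the capacities $p-1$ suffice.
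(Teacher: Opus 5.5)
\textbf{There is a genuine gap.} Your plan fails on the indices $d\le X$ with a prime factor $p>\sqrt n$. Your claim that only $o(\sqrt n/\log n)$ indices reach the tail is false.

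Such $d$ have the form $mp$ with $m<X/\sqrt n=(\log n)^3$.
\begin{itemize}
\item The primes in $(\sqrt n,X]$ alone number $\sim2\sqrt n(\log n)^2$.
\item Your rule sends all $\sim\sqrt n(\log n)^2$ indices $2p$, $\sqrt n<p\le X/2$, to the block of $2$, which has one spare residue.
\item These cofactors $m$ are not smooth, so no de~Bruijn or Rankin estimate makes this overflow small.
\item A Hall-type reassignment cannot help, because such $d$ have no other prime factor $\le\delta\sqrt n$.
\end{itemize}
Your greedy tail cannot absorb them either. For the $\sim2\sqrt n/\log n$ primes $p\in(\sqrt n,2\sqrt n]$ we have $|A_p|\asymp\sqrt n$. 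The bound ``at most $kT$ forbidden shifts'' then guarantees only $O(1)$ of them per window of length $O(n)$. That costs order $n^{3/2}/\log n$, the size of the main term, not $o(n^{3/2}/\log n)$.

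The smooth overflow is not few either. For $q\approx X^{1/3}$, a positive proportion of the $\asymp X/q$ indices with largest prime factor $q$ exceed the capacity $q$. These must be grouped into about $X/q^2$ extra length-$n$ windows by the residue trick, not sent individually to the tail.

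\textbf{The missing idea} is a zero-shift, multiplication-table placement. For fixed $m$, the sets $A_{mp}$ with $p$ prime in $(\sqrt n,X/m]$ all lie in $[1,n]$. They are pairwise disjoint, since a common element would be a multiple of $\lcm(mp,mp')=mpp'>n$. Hence every $d\le X$ with a prime factor above $\sqrt n$ fits into at most $(\log n)^3$ windows of length $n$.

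With this, the rest of your outline goes through:
\begin{itemize}
\item Put $y=(\log n)^5$ and give each $y$-smooth $d\le X$ its own window. Rankin's bound with $\sigma=9/10$ gives $\Psi(X,y)\le n^{9/20+o(1)}$.
\item Group the remaining $d\le X$ by their largest prime factor $q\in(y,\delta\sqrt n]$ into at most $1+X/q^2$ windows of length $n$ each. This uses $\pi(\delta\sqrt n)+O(X/y)$ windows in total, and $X/y=\sqrt n/(\log n)^2$.
\item Handle $d>X$ by your first-fit argument. Each closed window of length $2n$ carries at least $X$ points, so at most $n\log n/X+1=O(\sqrt n/(\log n)^2)$ windows are needed. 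This suffices, although your claim of polylogarithmically many windows is unsupported.
\end{itemize}
The paper quotes this lemma from Alon et al.\ without proof, so there is no in-paper argument to compare against.
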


\begin{lemma}\label{lem:compatible}
For a packing $(\sigma_d+B_d)_{d\in D}$ into $[N]$, we say that
$D$ is phase-compatible with respect to this packing if
$\sigma_d\equiv\sigma_e\pmod{\gcd(d,e)}$ for all $d,e\in D$. If $D\subseteq[n-1]$, then
\begin{equation}\label{eq:compatible}
N\ge\sum_{d\in D}(nd-d^2-d+1).
\end{equation}
\end{lemma}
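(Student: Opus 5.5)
The plan is to adapt the proof of Lemma~\ref{lem:bounded-lower} to hulls of different lengths, using phase-compatibility in place of primality to force every pair of hulls to have short overlap.

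\textbf{Step 1: hulls and the pairwise overlap bound.} For $d\in D$ let $I_d=[\sigma_d+d,\sigma_d+nd]$ be the integer hull of $\sigma_d+B_d$. Then
\[
\ell_d:=|I_d|=(n-1)d+1,\qquad \sigma_d+B_d=I_d\cap(\sigma_d+d\Z).
\]
Take distinct $d,e\in D$. Phase-compatibility gives $\sigma_d\equiv\sigma_e\pmod{\gcd(d,e)}$. By Lemma~\ref{lem:crt}, the system $z\equiv\sigma_d\pmod d$, $z\equiv\sigma_e\pmod e$ is soluble, and every interval of $\lcm(d,e)$ consecutive integers contains a solution. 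If $|I_d\cap I_e|\ge\lcm(d,e)$, that solution would lie in both shifted sets, contradicting disjointness. Hence
\[
|I_d\cap I_e|\le\lcm(d,e)-1\le de-1.
\]

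\textbf{Step 2: sweep by left endpoints.} List $D=\{d_1,\dots,d_t\}$ so that the hulls $I_{d_i}=[a_i,b_i]$ satisfy $a_1\le\dots\le a_t$. Write $\ell_i=\ell_{d_i}$ and $\Delta_i=a_{i+1}-a_i$. Since $a_i\le a_{i+1}$, the overlap of consecutive hulls is $\max\{0,\min\{\ell_i-\Delta_i,\ell_{i+1}\}\}$.

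The new point compared with Lemma~\ref{lem:bounded-lower} is to exclude nesting, i.e.\ the case where this minimum equals $\ell_{i+1}$ with a positive value. That case would give $(n-1)d_{i+1}+1=\ell_{i+1}\le d_id_{i+1}-1$. This is impossible, because $d_i\le n-1$; this is exactly where the hypothesis $D\subseteq[n-1]$ is used.

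So either the overlap is $0$, giving $\Delta_i\ge\ell_i$, or it equals $\ell_i-\Delta_i\le d_id_{i+1}-1$. In both cases
\[
\Delta_i\ge\ell_i-d_id_{i+1}+1.
\]

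\textbf{Step 3: summation.} All hulls lie in $[N]$, so
\[
N\ge b_t-a_1+1\ge\sum_{i<t}\Delta_i+\ell_t\ge\sum_{i=1}^t\ell_i-\sum_{i<t}d_id_{i+1}+(t-1).
\]
Using $d_id_{i+1}\le\frac12(d_i^2+d_{i+1}^2)$, we get $\sum_{i<t}d_id_{i+1}\le\sum_i d_i^2$. Dropping $t-1\ge0$ then gives
\[
N\ge\sum_{d\in D}\bigl((n-1)d+1-d^2\bigr)=\sum_{d\in D}(nd-d^2-d+1),
\]
which is \eqref{eq:compatible}.

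The main obstacle is the nesting issue in Step~2. With unequal lengths, a short hull could sit inside a long one, and charging the overlap to consecutive pairs would then break down. The check $\ell_{i+1}>d_id_{i+1}$ shows that nesting cannot occur, and once it is in place the remaining computation is routine.
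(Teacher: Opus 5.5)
Your proof is correct and follows essentially the same route as the paper's. Both arguments order the hulls by left endpoint, bound consecutive overlaps by $\lcm(d_i,d_{i+1})-1<d_id_{i+1}$ via phase-compatibility and Lemma~\ref{lem:crt}, use $d_i\le n-1$ to rule out nesting, and finish with $2d_id_{i+1}\le d_i^2+d_{i+1}^2$. The only difference is cosmetic: the paper telescopes the right-endpoint increments $r_{i+1}-r_i$, while you telescope the left-endpoint gaps $\Delta_i$, and both yield the same quantity $\sum_{i<t}\Delta_i+\ell_t$.
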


\begin{proof}
The assertion is immediate for $D=\varnothing$, so assume
$D=\{d_1,\ldots,d_t\}\ne\varnothing$. For $d\in D$, let
$I_d=[\sigma_d+d,\sigma_d+nd]$ be the integer hull of
$\sigma_d+B_d$, so that $|I_d|=(n-1)d+1$.

Order $d_1,\ldots,d_t$ so that the left endpoints of
$I_{d_1},\ldots,I_{d_t}$ are nondecreasing. By phase compatibility
and Lemma~\ref{lem:crt}, the congruences corresponding to $d_i$ and
$d_{i+1}$ have a common residue class modulo
$\lcm(d_i,d_{i+1})$. Hence
$|I_{d_i}\cap I_{d_{i+1}}|<\lcm(d_i,d_{i+1})\le d_id_{i+1}$.
Since $d_i\le n-1$, we also have
$d_id_{i+1}\le(n-1)d_{i+1}<|I_{d_{i+1}}|$.
Thus $I_{d_{i+1}}$ cannot be contained in $I_{d_i}$, so the right
endpoints are also strictly increasing.

Let $r_i$ denote the right endpoint of $I_{d_i}$. Then
$r_{i+1}-r_i\ge |I_{d_{i+1}}|-|I_{d_i}\cap I_{d_{i+1}}|$, and hence
\begin{align*}
N
&\ge r_t-\min I_{d_1}+1 \\
&\ge \sum_{i=1}^t |I_{d_i}|
   -\sum_{i<t}|I_{d_i}\cap I_{d_{i+1}}| \\
&\ge (n-1)\sum_{i=1}^t d_i+t-\sum_{i<t}d_id_{i+1} \\
&\ge (n-1)\sum_{i=1}^t d_i+t-\sum_{i=1}^t d_i^2,
\end{align*}
where the last inequality follows from
$2d_id_{i+1}\le d_i^2+d_{i+1}^2$. This gives
\eqref{eq:compatible}.
\end{proof}

A \emph{cyclic interval} in $\Z/r\Z$ is the image of an integer
interval of length at most $r$. If each $q_i$ is coprime to $r$, call
$(q_1,\ldots,q_d;r)$ \emph{$R$-regular} if
$\sum_i h_iq_i^{-1}\not\equiv0\pmod r$ for every
$h\in\Z^d$ with $0<\norm h_2<R$.
Regularity passes to subtuples by inserting zero coordinates.

\begin{lemma}\label{lem:phase}
The following statements hold.

\textup{(i)} Every integer interval of length at least $td$ contains a
translate of $\{d,2d,\ldots,td\}$ in any prescribed residue class modulo $d$.

\textup{(ii)} Let $p,q$ be coprime, fix a residue $a$ modulo $p$ and an
integer lift $\widetilde a$, and let $J$ be a cyclic interval in
$\Z/(pq)\Z$. The parameters $t\in\Z/q\Z$ satisfying
$\widetilde a+pt\in J$ form a cyclic interval of cardinality at least
$\lfloor|J|/p\rfloor$.

\textup{(iii)} Let $d,r,q_1,\ldots,q_d\in\N$ with
$\gcd(q_i,r)=1$ for every $i$. Suppose $(q_1,\ldots,q_d;r)$ is
$R$-regular and $0<R<r$. If cyclic intervals
$T_i\subseteq\Z/r\Z$ satisfy $|T_i|\ge dr/R+3$, then for any integers
$b_1,\ldots,b_d$ there exist $t_i\in T_i$ and $b\in\Z/r\Z$ such that
$b\equiv b_i+q_it_i\pmod r$ for every $i$.
\end{lemma}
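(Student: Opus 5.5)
Parts (i) and (ii) are elementary counting, and part (iii) is where the real work lies. For (iii) I would recast simultaneous selection as a covering-radius problem for a lattice and apply the transference inequality \eqref{eq:transference} from Lemma~\ref{lem:standard}.

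For (i), let the interval be $[x,x+L-1]$ with $L\ge td$. Among the $d$ consecutive integers $x,\ldots,x+d-1$ there is exactly one $y$ in the prescribed class modulo $d$. Then $y,y+d,\ldots,y+(t-1)d\le x+d-1+(t-1)d\le x+L-1$. So $\sigma=y-d$ gives $\sigma+\{d,\ldots,td\}\subseteq[x,x+L-1]$ with $\sigma+d=y$ in the required class.

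For (ii), lift $J$ to an integer interval $\widetilde J$ of length $|J|\le pq$. The integers of $\widetilde J$ congruent to $\widetilde a$ modulo $p$ form an arithmetic progression $\widetilde a+pt_0,\widetilde a+p(t_0+1),\ldots$. Such a progression has at least $\lfloor|J|/p\rfloor$ terms and at most $q$ terms. Reducing modulo $pq$, the parameters $t$ are consecutive residues $t_0,t_0+1,\ldots$ modulo $q$, hence a cyclic interval. It has the claimed cardinality because $t\mapsto\widetilde a+pt$ is injective from $\Z/q\Z$ into $\Z/(pq)\Z$.

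For (iii), write $v=(q_1^{-1},\ldots,q_d^{-1})\in\Z^d$ using fixed integer representatives, and put $\Lambda=\Z v+r\Z^d$, a full-rank lattice. The condition $b\equiv b_i+q_it_i$ for all $i$ is equivalent to $t_i\equiv q_i^{-1}(b-b_i)\pmod r$. As $b$ ranges over $\Z$, the vectors $\bigl(q_i^{-1}(b-b_i)\bigr)_i$ range exactly over the coset $-w+\Lambda$, where $w=(q_i^{-1}b_i)_i$ (the $r\Z^d$ part accounts for reduction modulo $r$). So it suffices to find a point of $-w+\Lambda$ inside the real box $\prod_i[\alpha_i,\alpha_i+|T_i|-1]$, where $[\alpha_i,\alpha_i+|T_i|-1]$ is an integer lift of $T_i$; any integer point in that box reduces into $\prod T_i$.

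Next I would compute the dual lattice. A vector $y$ lies in $\Lambda^*$ exactly when $y\in r^{-1}\Z^d$ and, writing $y=h/r$, $\sum_ih_iq_i^{-1}\equiv0\pmod r$. By $R$-regularity every nonzero such $h$ has $\norm h_2\ge R$, so $\lambda_1(\Lambda^*)\ge R/r$. Then \eqref{eq:transference} gives $\rho(\Lambda)\le dr/(2R)$, and $-w+\Lambda$ has the same covering radius.

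A real cube of side $s$ contains a Euclidean ball of radius $s/2$. The box above has sides $|T_i|-1\ge dr/R+2$, so it contains a ball of radius at least $dr/(2R)\ge\rho$ about its centre, hence a point of $-w+\Lambda$. That point is automatically integral, so it gives the desired $t_i\in T_i$ and $b$.

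The step I expect to need the most care is this last one: checking that the $+3$ slack absorbs the passage between integer intervals, cyclic intervals and real boxes. In particular, the lift of each $T_i$ must be a genuine interval of length $|T_i|\le r$. If $dr/R+3>r$ the hypothesis cannot hold, since no cyclic interval in $\Z/r\Z$ has more than $r$ elements, so nothing needs to be proved. The assumption $0<R<r$ is used only to make the regularity condition meaningful.
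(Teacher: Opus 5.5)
Your proposal is correct and follows essentially the same route as the paper. Parts (i) and (ii) use the same elementary counting, and (iii) uses the same lattice $\Lambda=\Z(q_1^{-1},\ldots,q_d^{-1})+r\Z^d$, the dual description giving $\lambda_1(\Lambda^*)\ge R/r$, the transference bound $\rho(\Lambda)\le dr/(2R)$, and the fact that the lifted box has half-sides exceeding $\rho$. The only difference is cosmetic: you phrase the last step as a Euclidean ball contained in the box, whereas the paper bounds each coordinate distance by the Euclidean distance.
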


\begin{proof}
For \textup{(i)}, take any $td$ consecutive integers inside the given
interval. Each residue class modulo $d$ occurs exactly $t$ times. The
$t$ representatives of the prescribed class are consecutive terms of
one arithmetic progression with difference $d$, and hence are a
translate of $\{d,2d,\ldots,td\}$.

For \textup{(ii)}, choose an integer interval
$\widetilde J=[A,A+|J|-1]$ whose image modulo $pq$ is $J$. The integers
in $\widetilde J$ congruent to $a$ modulo $p$ form a consecutive segment
of an arithmetic progression of difference $p$, containing at least
$\lfloor|J|/p\rfloor$ terms. Because $p$ is invertible modulo $q$, the
map $t\mapsto\widetilde a+pt\pmod{pq}$
is a bijection from $\Z/q\Z$ onto the residues modulo $pq$ congruent to
$a$ modulo $p$. The parameters corresponding to the above consecutive
terms are consecutive modulo $q$ and therefore form the asserted cyclic
interval.

For \textup{(iii)}, define the full-rank lattice
\[
 \Lambda=\{t\in\Z^d:q_it_i\equiv q_1t_1\pmod r
 \,\text{ for }2\le i\le d\}.
\]
It contains $r\Z^d$. Modulo $r\Z^d$, every element of $\Lambda$ is
uniquely of the form
$c(q_1^{-1},\ldots,q_d^{-1})$, $c\in\Z/r\Z$. Hence every vector of
$\Lambda^*$ is $h/r$ for some $h\in\Z^d$, and such a vector belongs to
$\Lambda^*$ exactly when
$\sum_i h_iq_i^{-1}\equiv0\pmod r$. Thus
\[
\Lambda^*=\left\{\frac hr:h\in\Z^d,\
\sum_{i=1}^d h_iq_i^{-1}\equiv0\pmod r\right\}.
\]
If $0\ne h/r\in\Lambda^*$, regularity gives $\norm h_2\ge R$;
therefore $\lambda_1(\Lambda^*)\ge R/r$. By
\eqref{eq:transference},
$\rho(\Lambda)\le d/(2\lambda_1(\Lambda^*))\le dr/(2R)$.
Choose $v\in\Z^d$ with
$v_i\equiv-b_iq_i^{-1}\pmod r$. Then $b_i+q_iv_i\equiv0\pmod r$
for every $i$, and the set of integer vectors $t$ for which all
$b_i+q_it_i$ are congruent modulo $r$ is precisely $v+\Lambda$.

Lift each $T_i$ to an integer interval $\widetilde T_i$ of cardinality
$|T_i|$, and let $\mathcal B=\prod_i\widetilde T_i\subset\mathbb R^d$.
If $c$ is the centre of this box, then every half-side has length
$(|T_i|-1)/2\ge dr/(2R)+1>\rho(\Lambda)$.
By the definition of the covering radius, some $v+w$, $w\in\Lambda$,
satisfies $\norm{c-(v+w)}_2\le\rho(\Lambda)$. Each coordinate distance
is at most this Euclidean distance, so $v+w\in\mathcal B$. Reducing its
coordinates modulo $r$ gives $t_i\in T_i$, and their common residue
$b_i+q_it_i$ gives the required $b\in\Z/r\Z$.
\end{proof}

Fix an integer $s\ge3$ and $0<\eta<1$, and put
$R(P)=P^{1/(s-1)}/\log P$. We call a tuple $p_1>\cdots>p_s$ of
primes in $(P,(1+\eta)P]$ \emph{strongly regular} if
$(p_j:j\ne i;p_i)$ is $R(P)$-regular for every $i$.

\begin{lemma}\label{lem:regular-blocks}
All but $o(P/\log P)$ primes in $(P,(1+\eta)P]$ can be
partitioned into strongly regular $s$-tuples as $P\to\infty$.
Any family of at most $p$ differences in $[n]$ divisible by $p$ fits
in an interval of length $n$.
Moreover, with $\alpha_{s,\eta}=(s-1)/(s(1+\eta)^2)$, every such tuple
satisfies
$\sum_{i=1}^{s-1}p_ip_{i+1}\ge
\alpha_{s,\eta}\sum_{i=1}^s p_i^2$, and the exceptional primes have
squared sum $o(P^3/\log P)$.
\end{lemma}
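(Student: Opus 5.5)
The lemma makes three claims, and I will prove them in increasing order of difficulty: the numerical inequality, the packing claim, and finally the partition into strongly regular tuples, which is the main point.

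\emph{The inequality.} For primes $p_1>\cdots>p_s$ in $(P,(1+\eta)P]$, each product satisfies $p_ip_{i+1}>P^2$, so $\sum_{i<s}p_ip_{i+1}>(s-1)P^2$. On the other hand $\sum_i p_i^2\le s(1+\eta)^2P^2$. Dividing gives the constant $\alpha_{s,\eta}$.

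\emph{The exceptional primes.} Once the first claim shows there are $o(P/\log P)$ of them, each has square at most $(1+\eta)^2P^2$. Hence their squared sum is $o(P^3/\log P)$.

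\emph{The packing claim.} Take differences $d=m_jp$ for $j=1,\dots,t$ with $t\le p$. Shift $A_{m_jp}$ by $\sigma_j=j-1\in[0,p-1]$. The elements of the $j$-th shifted set are all congruent to $j-1$ modulo $p$, so sets with different $j$ are disjoint. Every element of $A_d$ lies in $[d,n]\subseteq[p,n]$, so all shifted sets lie in $[p,n+p-1]$, an interval of length $n$.

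\emph{The partition.} I will build the tuples greedily, one prime at a time, and keep a stronger invariant. A partial tuple $(p_1,\dots,p_k)$ is \emph{good} if $(p_j:j\ne i,\ j\le k;\,p_i)$ is $R$-regular for every $i\le k$, where $R=R(P)$. The ordering $p_1>\cdots>p_s$ is irrelevant, since the primes can be sorted at the end.

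Suppose $(p_1,\dots,p_k)$ is good. I count the primes $p$ in the remaining pool for which $(p_1,\dots,p_k,p)$ fails to be good. Since $(p_1,\dots,p_k)$ is good, any violating relation must either use $p$ as the modulus or have a nonzero coefficient $h$ on $p^{-1}$.

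\begin{itemize}
\item[(a)] \emph{Modulus $p$.} Take $h\in\Z^k$ with $0<\norm h_2<R$. Clearing denominators, the relation says that $p$ divides $N_h=\sum_j h_j\prod_{l\ne j}p_l$. Reducing $N_h$ modulo a prime $p_j$ with $h_j\ne0$ gives a nonzero residue, because $|h_j|<R<P<p_j$. So $N_h\ne0$. Also $|N_h|\ll_s R\,((1+\eta)P)^{k-1}$, so $N_h$ has $O_s(1)$ prime factors exceeding $P$. There are $O(R^k)$ vectors $h$, so this case excludes $O_s(R^{s-1})=O(P/(\log P)^{s-1})$ primes.
\item[(b)] \emph{Modulus $p_i$ with $i\le k$ and coefficient $h\ne0$ on $p^{-1}$.} The relation determines $p^{-1}$, hence $p$, modulo $p_i$. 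Since $p$ lies in an interval of length $\eta P<p_i$, at most one or two values of $p$ are possible. Summing over $i$ and over the $O(R^{s-1})$ coefficient vectors again gives $O_s(P/(\log P)^{s-1})$.
\end{itemize}

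Because $s\ge3$, both bounds are $o(P/\log P)$, uniformly over all partial tuples. The greedy procedure therefore completes a new $s$-tuple whenever at least $C_sP/(\log P)^{s-1}+s$ primes remain. It stops only when $o(P/\log P)$ primes are left, and every completed tuple is strongly regular.

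The main obstacle is case (a): one must check that $N_h\neq0$ and bound the number of its large prime divisors. Case (b) and the choice of $R(P)=P^{1/(s-1)}/\log P$ are designed precisely so that $R^{s-1}$ is smaller than $P/\log P$ by a power of $\log P$.
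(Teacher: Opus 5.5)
Your proof is correct. The inequality and the exceptional squared sum are handled exactly as in the paper. Your explicit shifts $\sigma_j=j-1$ are a concrete form of the paper's assignment of distinct residues modulo $p$.

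The genuine difference is in the partition. The paper proceeds in four steps:
\begin{itemize}
\item it splits $(P,(1+\eta)P]$ into $s$ consecutive subintervals;
\item it trims their primes to sets $S_1,\dots,S_s$ of a common size $M$, using the prime number theorem;
\item it bounds the number of non-strongly-regular tuples in $S_1\times\cdots\times S_s$ by $O_s(R^{s-1}M^{s-1})=o(M^s)$;
\item it uses independent random permutations of $S_2,\dots,S_s$ to find $M$ disjoint rows, only $o(M)$ of which are bad.
\end{itemize}
In that count, for each modulus position $i$ and each $h$, the paper solves for a coordinate $p_\ell$ with $\ell\ne i$ and $h_\ell\ne0$. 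So only the mechanism of your case (b) is ever needed there.

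Your greedy construction with the hereditary ``good'' invariant must also handle relations in which the newly added prime is the modulus. Your case (a) argument supplies exactly that extra ingredient: $N_h\ne0$ by reduction modulo some $p_j$ with $h_j\ne0$, and $|N_h|\ll_s R((1+\eta)P)^{k-1}$ has only $O_s(1)$ prime factors exceeding $P$.

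What each route buys:
\begin{itemize}
\item Yours is deterministic and avoids the equal-size blocks and the averaging. It bounds the number of exceptional primes by the explicit quantity $O_s(P/(\log P)^{s-1})$ without any input on the distribution of primes.
\item The paper's gets the decreasing order $p_1>\cdots>p_s$ for free from the labelling of the subintervals. You correctly dispose of the ordering by noting that strong regularity is invariant under permuting the primes, since the set of $h$ with $0<\norm{h}_2<R$ is permutation invariant.
\end{itemize}
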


\begin{proof}
Divide $(P,(1+\eta)P]$ into $s$ consecutive intervals of equal length
and label them from right to left. By the prime number theorem, each
contains $(\eta/s+o(1))P/\log P$ primes. Removing a total of
$o(P/\log P)$ primes, choose subsets $S_1,\ldots,S_s$ of equal size
$|S_i|=M\sim\eta P/(s\log P)$.
Every tuple in $S_1\times\cdots\times S_s$ is automatically ordered
$p_1>\cdots>p_s$.

We estimate the tuples that are not strongly regular. Fix the modulus
position $i$ and a nonzero vector
$h=(h_j)_{j\ne i}\in\Z^{s-1}$ with $\norm h_2<R=R(P)$. Choose
$\ell\ne i$ with $h_\ell\ne0$. Once the primes $p_j$, $j\ne\ell$,
are fixed, failure of regularity for this $i$ and $h$ requires
$h_\ell p_\ell^{-1}\equiv -\sum_{j\ne i,\ell}h_jp_j^{-1}\pmod{p_i}$.
Because $0<|h_\ell|<R<p_i$, the left coefficient is nonzero modulo
$p_i$; the congruence has either no solution or determines one residue
class for $p_\ell$ modulo $p_i$. The interval $(P,(1+\eta)P]$ has
length $\eta P<p_i$, so this residue class contains at most one member
of $S_\ell$. Thus, for fixed $i,h$, at most $M^{s-1}$ tuples are bad.
There are $O_s(R^{s-1})$ possible $h$ and $s$ choices of $i$, whence
the total number $\mathcal N$ of bad tuples satisfies
\[
 \mathcal N=O_s(R^{s-1}M^{s-1})
 =O_{s,\eta}\left(\frac{M^s}{(\log P)^{s-2}}\right)=o(M^s),
\]
where we used $R^{s-1}=P/(\log P)^{s-1}$ and
$M\asymp_{s,\eta}P/\log P$.

Fix an ordering of $S_1$ and choose independent uniformly random
permutations of $S_2,\ldots,S_s$. The entries in each of the $M$ rows
form a tuple, and different rows use disjoint primes. Each row is
uniform on $S_1\times\cdots\times S_s$ after its first coordinate is
fixed; equivalently, summing over the rows, its expected number of bad
tuples is $M\mathcal N/M^s=\mathcal N/M^{s-1}=o(M)$.
Hence some choice of the permutations has only $o(M)$ bad rows.
Deleting those rows, together with the primes discarded when the sets
$S_i$ were equalized, leaves strongly regular $s$-tuples and only
$o(P/\log P)$ exceptional primes.

For the divisibility assertion, let $E\subseteq[n]$ contain at most
$p$ differences, all divisible by $p$. Assign distinct residues
$c_d\pmod p$ to the members $d\in E$. Since $p\mid d$, choose a
residue $r_d\pmod d$ satisfying $r_d\equiv c_d\pmod p$. Every integer
interval of length $n$ contains at least $\lfloor n/d\rfloor$
consecutive representatives of $r_d\pmod d$; these representatives
form a translate of $A_d$. The translates for distinct $d$ are
disjoint because their elements have distinct residues modulo $p$.
Thus all the corresponding progressions fit in the same length-$n$
interval.

Finally, for a tuple in the stated prime interval,
\[
 \sum_{i=1}^{s-1}p_ip_{i+1}>(s-1)P^2,
 \qquad
 \sum_{i=1}^s p_i^2\le s(1+\eta)^2P^2,
\]
which gives the factor $\alpha_{s,\eta}$. The exceptional set contains
$o(P/\log P)$ primes, each at most $(1+\eta)P$, so its squared sum is
$o(P^3/\log P)$.
\end{proof}

The following construction treats both models, allowing small multiples
of each prime in the bounded-diameter model.

\begin{lemma}\label{lem:blocks}
Let $p_1>\cdots>p_s$ be a strongly regular tuple in $(P,(1+\eta)P]$,
and let $1\le h_i\le H=H(P)=o(R(P))$ be integers.
For all sufficiently large $P$, the family
$\{A_{mp_i}:1\le i\le s,\ 1\le m\le h_i\}$, with $n\ge p_1^2$,
and the family $\{B_{p_i}:1\le i\le s\}$, with $n\ge p_1$,
admit packings in intervals of lengths at most, respectively,
\begin{align*}
&sn-\sum_{i=1}^{s-1}p_ip_{i+1}+O_{s,\eta}(HP^2/R(P)),\\
&n\sum_{i=1}^s p_i-\sum_{i=1}^{s-1}p_ip_{i+1}
+O_{s,\eta}(P^2/R(P)).
\end{align*}
\end{lemma}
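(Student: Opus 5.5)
We place the blocks consecutively along a line in the order $p_1,p_2,\dots,p_s$, with consecutive blocks overlapping by almost the maximum amount that Lemma~\ref{lem:crt} permits. For the $B$-family, let $I_i$ be the hull of $\sigma_i+B_{p_i}$, of length $(n-1)p_i+1$. Set $\ell_i=\min I_i$. Prescribe the gaps $\ell_{i+1}-\ell_i=(n-1)p_i+1-(p_ip_{i+1}-E)$ for $i<s$, where $E=O(P^2/R)$ is a slack to be chosen. Then $|I_i\cap I_{i+1}|=p_ip_{i+1}-E$. The total length is $n\sum p_i-\sum p_ip_{i+1}+O_s(E+P)$, which is the claimed bound. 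The only constraint is that each $\sigma_i\pmod{p_i}$ be chosen so that the progressions are disjoint.

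Non-consecutive pairs are automatic. Since $n\ge p_1$, we have $|I_i|\ge(n-1)P$, so $I_i$ and $I_{i+2}$ overlap only if $2(n-1)P\lesssim p_ip_{i+1}$, which fails. Hence only pairs $(i,i+1)$ interact. For such a pair, the common residue of $\sigma_i\pmod{p_i}$ and $\sigma_{i+1}\pmod{p_{i+1}}$ is a single class $z\pmod{p_ip_{i+1}}$. Disjointness holds exactly when the representatives of that class avoid the overlap window $W_i=I_i\cap I_{i+1}$. Since $|W_i|=p_ip_{i+1}-E$, this means $z$ must lie in the complementary cyclic interval of length $E$ in $\Z/(p_ip_{i+1})\Z$. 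We will take $E$ somewhat larger, at about $2sP^2/R$.

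The central step is to reduce these pairwise cyclic constraints to one application of Lemma~\ref{lem:phase}(iii). Pass all phases through a common modulus: set $r=p_i$ and parametrize $\sigma_{i\pm1}$ by $t\in\Z/p_i\Z$ via Lemma~\ref{lem:phase}(ii). Given a lift $\widetilde a$ of $\sigma_{i+1}\pmod{p_{i+1}}$, the admissible values of $\sigma_i$ are $\widetilde a+p_{i+1}t$ for $t$ in a cyclic interval of length about $E/p_{i+1}\gtrsim 2sP/R\ge (s-1)p_i/R+3$.

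Processing greedily along the chain does not work, because each $\sigma_i$ appears in two constraints. Instead we decouple the chain by parity. First fix the phases $\sigma_j$ for all $j$ of one parity arbitrarily. Then each remaining index $i$ sees at most two fixed neighbours. Each neighbour $j$ gives a condition on $\sigma_i\pmod{p_i}$, namely that $\sigma_i$ lie in a cyclic interval of length at least $E/p_j$ in $\Z/p_i\Z$ after the change of variable $\sigma_i\equiv b_j+q_jt_j$.

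Requiring a common $b=\sigma_i\in\Z/p_i\Z$ with $b\equiv b_j+p_jt_j$ and $t_j\in T_j$ is exactly the situation of Lemma~\ref{lem:phase}(iii). The modulus is $r=p_i$ and the tuple is $(p_j:j\ne i;p_i)$, with zero coordinates for non-neighbours. This tuple is $R(P)$-regular by strong regularity, and $R<p_i$. The interval-length condition $|T_j|\ge(s-1)p_i/R+3$ holds by our choice of $E$. So each free phase can be chosen, and the construction succeeds.

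We expect the main obstacle to be the bookkeeping between cyclic intervals modulo $p_ip_{i+1}$ and modulo $p_i$: checking that the lift in (ii) is consistent and that the slack $E$ loses only $O(P^2/R)$. Strong regularity is needed for all $i$ simultaneously, and that is precisely the hypothesis.

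For the $A$-family we use the same chain, now on blocks. Block $i$ consists of $A_{mp_i}$ for $m\le h_i$. By the divisibility assertion of Lemma~\ref{lem:regular-blocks}, it fits in one length-$n$ window whose phases are all determined by one residue modulo $p_i$ together with distinct residues $c_m$. Its hull has length about $n$. Consecutive windows overlap by about $p_ip_{i+1}-E'$, where $E'=O(HP^2/R)$. The $h_ih_{i+1}\le H^2$ cross pairs $(mp_i,m'p_{i+1})$ each impose a cyclic window condition. Their moduli divide $mm'p_ip_{i+1}$, and the residue offsets $c_m$ are fixed in advance. After reduction modulo $p_ip_{i+1}$, these pairs forbid at most $O(H)$ shifted copies of the overlap pattern. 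We would handle this by taking the slack $E'$ a factor $H$ larger, so that every cross pair is safe. Since $H=o(R)$, the required interval lengths still exceed $(s-1)r/R+3$. Summing gives $sn-\sum p_ip_{i+1}+O(HP^2/R)$. The condition $n\ge p_1^2$ ensures that only neighbouring blocks meet.
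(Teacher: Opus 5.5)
\textbf{Gap 1: non-consecutive blocks do interact.} The proposal breaks at the claim that only consecutive blocks interact. This claim is false under the lemma's hypotheses, and your parity decoupling depends on it.

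For the $B$-family, $I_i$ and $I_{i+2}$ meet exactly when the two overlap windows at the ends of $I_{i+1}$ meet. That happens when $(p_ip_{i+1}-E)+(p_{i+1}p_{i+2}-E)>(n-1)p_{i+1}+1$, which is roughly $n<p_i+p_{i+2}$. It is not the inequality $2(n-1)P\lesssim p_ip_{i+1}$ you wrote. Only $n\ge p_1$ is assumed and $p_i+p_{i+2}>2P$, so this overlap does occur. It is precisely the top window $p\approx n$ in the proof of Theorem~\ref{thm:equal}.

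For the $A$-family with $n$ near $p_1^2$, every consecutive offset $n-p_ip_{i+1}+E'$ is $O(\eta n)+o(n)$. So when $s\eta$ is small, all $s$ windows overlap pairwise, each pair by nearly $p_ip_j$. This is the regime $p\approx\sqrt n$ used in Theorem~\ref{thm:bounded}. Fixing all phases of one parity arbitrarily then makes collisions between, say, blocks $i$ and $i+2$ essentially unavoidable.

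The paper handles this in two steps. First, it proves $|J_i\cap J_j|\le p_ip_j-2(j-i)\Gamma$ for \emph{all} $i<j$, where $J_i$ are the host intervals and $\Gamma$ is the slack. This is an induction on $j-i$ using $n\ge p_1^2$, resp.\ $n\ge p_1$. Second, it chooses the phases $a_1,a_2,\dots$ in order, each $a_j$ simultaneously avoiding the forbidden cyclic intervals from \emph{every} earlier $i<j$. That is where Lemma~\ref{lem:phase}(iii) is applied to the $(j-1)$-dimensional tuple $(p_1,\dots,p_{j-1};p_j)$, and where full strong regularity is genuinely needed. Note also that if the interactions did form a path, greedy processing in the order $1,\dots,s$ would already work with Lemma~\ref{lem:phase}(ii) alone, since each phase would have a single earlier neighbour. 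Lattice covering is required precisely because the interactions are not a path.

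\textbf{Gap 2: the residues inside an $A$-block.} Suppose the residues $c_m$ inside block $i$ are arbitrary distinct classes modulo $p_i$, as in the divisibility assertion of Lemma~\ref{lem:regular-blocks}. Then, for a pair of blocks, the $h_ih_j$ collision classes modulo $p_ip_j$ are $C_{ij}$ (the CRT combination of the two block phases) plus the CRT lifts of $(c_m,c'_v)$. These lifts are typically scattered over all of $\Z/(p_ip_j)\Z$. Each class must avoid a window of size close to $p_ip_j$, so $C_{ij}$ would have to lie in an intersection of short cyclic intervals at unrelated positions, which is generally empty. Enlarging the slack by a factor $H$ does not help, because the obstruction is the positions of the shifts, not their number.

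The missing idea is to take $c_m=m\rho_i$, where $\rho_i\equiv p_jt_{ij}\pmod{p_i}$ with $t_{ij}\in[1,L]$ for every $j\ne i$ simultaneously, and $L=\lceil(s-1)(1+\eta)P/R\rceil+3$. This is itself an application of Lemma~\ref{lem:phase}(iii), with zero offsets and the tuple $(p_j:j\ne i;p_i)$. Then every cross collision class has the form $C_{ij}+mp_jt_{ij}+vp_it_{ji}$, with offset in $[0,D]$ and $D=O(HPL)$. Only one interval, the overlap shifted back by $\{0,\dots,D\}$, has to be avoided. This is exactly where the error term $O(HP^2/R)$ comes from.
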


\begin{proof}
Put $Q=(1+\eta)P$, $R=R(P)$ and $L=\ceil{(s-1)Q/R}+3$.
For the $A$-family set $D=\ceil{2HQL}$ and $\ell_i=n$;
for the $B$-family set $D=0$ and $\ell_i=np_i$.
In both cases put $\Gamma=D+\ceil{QL}$.
Since $R=P^{1/(s-1)}/\log P$, we have $R=o(P)$ and
$L=O_{s,\eta}(P/R)=o(P)$. The hypothesis $H=o(R)$ implies that
$H,L<p_s$ for all sufficiently large $P$. Furthermore,
\[
 \Gamma=
 \begin{cases}
 O_{s,\eta}(HP^2/R),&\text{for the $A$-family},\\
 O_{s,\eta}(P^2/R),&\text{for the $B$-family},
 \end{cases}
\]
so $\Gamma=o(P^2)$ in both cases.

We first realize the progressions associated with a fixed prime $p_i$.
In the $A$-case, apply Lemma~\ref{lem:phase}\textup{(iii)} with
modulus $p_i$, coefficients $(p_j)_{j\ne i}$, zero offsets, and
coordinate intervals $[1,L]$. Strong regularity supplies the required
$R$-regularity, and
$L\ge (s-1)Q/R+3\ge (s-1)p_i/R+3$. We obtain
$t_{ij}\in[1,L]$ and a residue $\rho_i\pmod{p_i}$ such that
$\rho_i\equiv p_jt_{ij}\pmod{p_i}$ for $j\ne i$.
The residue $\rho_i$ is nonzero, since otherwise $p_i\mid t_{ij}$,
whereas $1\le t_{ij}\le L<p_i$.

Let $a_i\in\Z/p_i\Z$ be chosen later. For $1\le m\le h_i$, prescribe
the residue $a_i+m\rho_i\pmod{p_i}$ for $A_{mp_i}$. These residues are
pairwise distinct because $h_i<p_i$ and $\rho_i\ne0$. Choose a lift of
each prescribed residue modulo $mp_i$. With
$t=\lfloor n/(mp_i)\rfloor$, Lemma~\ref{lem:phase}\textup{(i)} places
a translate of $\{mp_i,2mp_i,\ldots,tmp_i\}=A_{mp_i}$ in that lifted
class inside any interval of length $n$. Hence all progressions
associated with $p_i$ fit disjointly in one length-$n$ interval.

In the $B$-case, prescribe the residue $a_i\pmod{p_i}$ for $B_{p_i}$.
Every interval of $np_i$ consecutive integers contains exactly $n$
representatives of this residue, and they form a translate of $B_{p_i}$.

Set
\[
 J_i=[u_i,u_i+\ell_i-1],\qquad u_1=0,\qquad
 u_{i+1}-u_i=\ell_i-p_ip_{i+1}+2\Gamma.
\]
Both endpoint sequences are strictly increasing. Indeed, the
left-endpoint increment is positive because $\ell_i\ge p_ip_{i+1}$:
use $n\ge p_1^2$ in the $A$-case and $np_i\ge p_ip_{i+1}$ in the
$B$-case. The right-endpoint increment is
$\ell_{i+1}-p_ip_{i+1}+2\Gamma>0$, by the same hypotheses.

For the $A$-family,
$(n-ab)+(n-bc)-(n-ac)=(\sqrt n-a)(\sqrt n-c)
+(\sqrt n-b)(a+c)\ge0$ for $0<a,b,c\le\sqrt n$. For the $B$-family,
$a(n-b)+b(n-c)-a(n-c)=b(n-a)+c(a-b)\ge0$ when
$n\ge a\ge b\ge c>0$. We claim that in either case
$u_j-u_i\ge\ell_i-p_ip_j+2(j-i)\Gamma\qquad(i<j)$.
The case $j=i+1$ is the definition of $u_{i+1}$. If the claim holds
for $(i,j)$, add
$u_{j+1}-u_j=\ell_j-p_jp_{j+1}+2\Gamma$. The claim for $(i,j+1)$
then reduces to
$\ell_j-p_ip_j-p_jp_{j+1}+p_ip_{j+1}\ge0$,
which is the corresponding preceding identity with
$(a,b,c)=(p_i,p_j,p_{j+1})$. This proves the claim by induction.
Since both endpoint sequences increase, a nonempty intersection has
cardinality $|J_i\cap J_j|=u_i+\ell_i-u_j
\le p_ip_j-2(j-i)\Gamma\le p_ip_j-2\Gamma$.

It remains to choose the phases $a_i$. For $i<j$,
Lemma~\ref{lem:crt} gives a unique residue $C_{ij}\pmod{p_ip_j}$ with
$C_{ij}\equiv a_i\pmod{p_i}$ and
$C_{ij}\equiv a_j\pmod{p_j}$. In the $A$-case, a collision between
the copies of $A_{mp_i}$ and $A_{vp_j}$ would satisfy
\[
x\equiv C_{ij}+mp_jt_{ij}+vp_it_{ji}\pmod{p_ip_j},
\qquad 0\le mp_jt_{ij}+vp_it_{ji}\le D.
\]
Indeed, modulo $p_i$ the displayed residue equals
$a_i+mp_jt_{ij}=a_i+m\rho_i$, and modulo $p_j$ it equals
$a_j+vp_it_{ji}=a_j+v\rho_j$. Its offset from $C_{ij}$ is at most
$2HQL\le D$. In the $B$-case, a collision has residue $C_{ij}$ and
$D=0$.

Let $E_{ij}=J_i\cap J_j$. If $E_{ij}\ne\varnothing$, let $F_{ij}$ be
the image modulo $p_ip_j$ of the integer interval
$E_{ij}-\{0,1,\ldots,D\}$. Avoiding $C_{ij}\in F_{ij}$ excludes all
possible collisions. The overlap bound gives
$|F_{ij}|\le |E_{ij}|+D \le p_ip_j-2\Gamma+D<p_ip_j$,
so the complement of $F_{ij}$ is a cyclic interval containing at least
$2\Gamma-D$ residues.

Choose $a_1$ arbitrarily and suppose $a_1,\ldots,a_{j-1}$ have been
chosen. For $i<j$ with $E_{ij}\ne\varnothing$, fix an integer lift
$\widetilde a_i$ and write
$C_{ij}\equiv\widetilde a_i+p_i\tau_i\pmod{p_ip_j}$, where
$\tau_i\in\Z/p_j\Z$.
By Lemma~\ref{lem:phase}\textup{(ii)}, the admissible values of
$\tau_i$ contain a cyclic interval $T_i$ of cardinality at least
$\lfloor(2\Gamma-D)/p_i\rfloor\ge L$;
indeed, $2\Gamma-D=D+2\ceil{QL}\ge2QL$ and $p_i\le Q$. If
$E_{ij}=\varnothing$, take $T_i=\Z/p_j\Z$; its cardinality exceeds
$L$ for large $P$.

The subtuple $(p_1,\ldots,p_{j-1};p_j)$ is $R$-regular and
$|T_i|\ge L\ge(s-1)Q/R+3\ge(j-1)p_j/R+3$.
Lemma~\ref{lem:phase}\textup{(iii)}, with offsets
$\widetilde a_i$ and coefficients $p_i$, gives
$\tau_i\in T_i$ and one residue $a_j\pmod{p_j}$ such that
$a_j\equiv\widetilde a_i+p_i\tau_i\pmod{p_j}$ for every $i<j$.
This is exactly the simultaneous avoidance of all preceding forbidden
intervals. Induction on $j$ excludes every collision between distinct
host blocks; the residue choices already exclude collisions within a
single host block.

Finally, the containing interval has length
\[
 u_s+\ell_s-u_1
 =\sum_{i=1}^s\ell_i-\sum_{i=1}^{s-1}p_ip_{i+1}
  +2(s-1)\Gamma.
\]
The bounds for $\Gamma$ give the two asserted error terms.
\end{proof}

\begin{lemma}[{Hildebrand--Tenenbaum~\cite{HildebrandTenenbaum};
Montgomery--Vaughan~\cite{MV}}]
\label{lem:smooth}
Let $P^+(d)$ be the largest prime factor of $d$, with $P^+(1)=1$,
and, for $z\ge1$ and $y\ge2$, let
$\Psi(z,y)=|\{d\le z:P^+(d)\le y\}|$.
For fixed $0<\sigma<1$,
\begin{equation}\label{eq:smooth}
\Psi(z,y)\le z^\sigma\prod_{p\le y}(1-p^{-\sigma})^{-1}
\le z^\sigma\exp\!\left(O_\sigma\left(\frac{y^{1-\sigma}}{\log y}\right)\right).
\end{equation}
\end{lemma}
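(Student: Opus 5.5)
The first inequality is Rankin's trick. For every $d\le z$ with $P^+(d)\le y$ we have $(z/d)^\sigma\ge1$, so
\[
\Psi(z,y)\le\sum_{P^+(d)\le y}\Bigl(\frac zd\Bigr)^\sigma
= z^\sigma\sum_{P^+(d)\le y}d^{-\sigma}.
\]
Here the last sum runs over all $y$-smooth integers. Every $y$-smooth $d$ factors uniquely as $\prod_{p\le y}p^{k_p}$, so this sum equals the finite Euler product $\prod_{p\le y}\sum_{k\ge0}p^{-k\sigma}=\prod_{p\le y}(1-p^{-\sigma})^{-1}$. Each geometric series converges because $p^{-\sigma}<1$. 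No analytic input is needed for this step.

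For the second inequality we take logarithms. We have $-\log(1-x)\le x+O(x^2/(1-x))$ for $0<x<1$. Applied with $x=p^{-\sigma}\le 2^{-\sigma}$, which is bounded away from $1$ for fixed $\sigma$, this gives
\[
\log\prod_{p\le y}(1-p^{-\sigma})^{-1}\le \sum_{p\le y}p^{-\sigma}+O_\sigma\Bigl(\sum_{p}p^{-2\sigma}\Bigr).
\]
The main term is estimated by partial summation from Chebyshev's bound $\pi(t)\ll t/\log t$ (or from \eqref{eq:moments} with $j=0$):
\[
\sum_{p\le y}p^{-\sigma}=\frac{\pi(y)}{y^\sigma}+\sigma\int_2^y\frac{\pi(t)}{t^{1+\sigma}}\,dt
\ll \frac{y^{1-\sigma}}{\log y}+\int_2^y\frac{t^{-\sigma}}{\log t}\,dt.
\]
The integral is split at $\sqrt y$. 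The range $[2,\sqrt y]$ contributes $O(y^{(1-\sigma)/2})$. The range $[\sqrt y,y]$ contributes at most $\frac{2}{\log y}\int_2^y t^{-\sigma}\,dt\ll_\sigma y^{1-\sigma}/\log y$. Since $y^{(1-\sigma)/2}\ll_\sigma y^{1-\sigma}/\log y$, the whole main term is $O_\sigma(y^{1-\sigma}/\log y)$.

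The main obstacle is the secondary term $\sum_p p^{-2\sigma}$. This sum converges only when $\sigma>1/2$. For $\sigma\le1/2$ one instead bounds $\sum_{p\le y}p^{-2\sigma}$ by the same partial summation. That gives $O_\sigma(y^{1-2\sigma}/\log y)$ for $\sigma<1/2$ and $O(\log\log y)$ for $\sigma=1/2$, and both are $o(y^{1-\sigma}/\log y)$. For $\sigma>1/2$ the secondary term is $O_\sigma(1)$, which is also absorbed because $y^{1-\sigma}/\log y$ is bounded below by a positive constant depending on $\sigma$ for $y\ge2$. Combining the two estimates and exponentiating gives \eqref{eq:smooth}, with the implied constant depending only on $\sigma$.
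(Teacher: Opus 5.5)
Your proof is correct. The paper gives no argument of its own for this lemma; it imports the bound from Hildebrand--Tenenbaum and Montgomery--Vaughan, where it is proved as you do, by Rankin's trick followed by partial summation for $\sum_{p\le y}p^{-\sigma}$. One simplification: the case split on $\sigma$ for the quadratic term is unnecessary, since $-\log(1-x)\le x/(1-x)\le(1-2^{-\sigma})^{-1}x$ for $0<x\le2^{-\sigma}$ already gives $\log\prod_{p\le y}(1-p^{-\sigma})^{-1}\ll_\sigma\sum_{p\le y}p^{-\sigma}$.
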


\section{The bounded-diameter model}\label{sec:bounded}

\begin{proof}[Proof of Theorem~\ref{thm:bounded}]
Taking $u=\sqrt n$ in Lemma~\ref{lem:bounded-lower} gives
$m_{\cP(\sqrt n)}(n)\ge \left(\frac43-o(1)\right)\frac{n^{3/2}}{\log n}$.
Since $\cP(\sqrt n)\subseteq[n]$, monotonicity gives the same lower
bound for $m(n)$.

We prove the matching upper bounds. Fix $s\ge3$, $0<\eta<1$ and
$J\in\N$, and set $x=\sqrt n$ and $\delta=(1+\eta)^{-J}$. For
$0\le j<J$, put $P_j=\delta(1+\eta)^jx$. Then
$(\delta x,x]=\bigcup_{j=0}^{J-1}(P_j,(1+\eta)P_j]$ is a disjoint
union. In each window, apply window
Lemma~\ref{lem:regular-blocks} and group all nonexceptional primes into
strongly regular $s$-tuples. For every tuple use the $A$-construction
in Lemma~\ref{lem:blocks} with $h_i=H=1$. Each exceptional prime and
each prime at most $\delta x$ is packed separately in an interval of
length $n$.

There are $O_{s,\eta}(P_j/\log P_j)$ tuples in the $j$th window. The
error per tuple in Lemma~\ref{lem:blocks} is
$O_{s,\eta}(P_j^2/R(P_j)) =O_{s,\eta}(P_j^{2-1/(s-1)}\log P_j)$,
so the sum of these errors is
$O_{s,\eta}(P_j^{3-1/(s-1)})=o(P_j^3/\log P_j)$. By
Lemma~\ref{lem:regular-blocks}, the exceptional primes also have
squared sum $o(P_j^3/\log P_j)$. The tuple saving is at least
$\alpha_{s,\eta}$ times the squared sum of the primes in that tuple.
After concatenating all host intervals, we therefore obtain
\[
m_{\cP(\sqrt n)}(n)
\le n\pi(x)-\alpha_{s,\eta}\sum_{\delta x<p\le x}p^2
+o(x^3/\log x).
\]
Since $J$ is fixed and $P_j\asymp_{\delta,\eta}x$, summing over the
$J$ windows preserves the $o(x^3/\log x)$ error term. Since $x=\sqrt n$ and
$\log x=\frac12\log n$, \eqref{eq:moments} gives
\[
 n\pi(x)=(2+o(1))\frac{n^{3/2}}{\log n},\qquad
 \sum_{\delta x<p\le x}p^2
 =\left(\frac23(1-\delta^3)+o(1)\right)
  \frac{n^{3/2}}{\log n}.
\]
Consequently,
\[
 \limsup_{n\to\infty}
 \frac{m_{\cP(\sqrt n)}(n)\log n}{n^{3/2}}
 \le2-\frac23\alpha_{s,\eta}(1-\delta^3).
\]
First let $J\to\infty$ with $s,\eta$ fixed, so $\delta\downarrow0$;
then let $s\to\infty$ and $\eta\downarrow0$. Since
$\alpha_{s,\eta}=(s-1)/(s(1+\eta)^2)\to1$, the right side tends to
$4/3$. Together with the lower bound, this proves the asserted
asymptotic for $m_{\cP(\sqrt n)}(n)$ in
\eqref{eq:full-bounded}.

For the full difference set, keep $s,\eta,J$ fixed and take $X$ and
$\cF_\delta$ as in Lemma~\ref{lem:reduction}. Thus
$X=\sqrt n(\log n)^3$ and
\[
 \cF_\delta=\bigcup_{\delta\sqrt n<p\le\sqrt n}
 \{p,2p,\ldots,\lfloor X/p\rfloor p\}.
\]
For all sufficiently large $n$, $X<n$ and $X<\delta^2n$. If an
integer $d\le X$ had two distinct prime divisors
$p,q>\delta\sqrt n$, then $d\ge pq>\delta^2n>X$, a contradiction.
Hence the displayed prime blocks are pairwise disjoint and form a
partition of $\cF_\delta$. Their sizes satisfy
$h_p=\lfloor X/p\rfloor\le H:=\lceil\delta^{-1}(\log n)^3\rceil$.
For every fixed window, $P_j\asymp_{\delta,\eta}\sqrt n$ and therefore
$\frac{H}{R(P_j)} =O_{s,\delta,\eta}\left( \frac{(\log n)^4}{n^{1/(2(s-1))}} \right)=o(1)$.
Thus Lemma~\ref{lem:blocks} applies to each strongly regular tuple
with these $h_p$, while the divisibility assertion in
Lemma~\ref{lem:regular-blocks} packs each exceptional prime block in a
separate length-$n$ interval.

For a window with lower endpoint $P$, the number of tuples is
$O(P/\log P)$ and the error per tuple is $O(HP^2/R(P))$; hence the
total construction error is
$O_{s,\eta}(HP^{3-1/(s-1)})=o(P^3/\log P)$.
The last relation follows from $H=O_\delta((\log n)^3)$ and
$P\asymp_{\delta,\eta}\sqrt n$. The exceptional squared sum is also
$o(P^3/\log P)$. Concatenating the $J$ windows and using
\eqref{eq:moments} gives
\begin{align*}
m_{\cF_\delta}(n)
&\le n\bigl(\pi(\sqrt n)-\pi(\delta\sqrt n)\bigr)
-\alpha_{s,\eta}\sum_{\delta\sqrt n<p\le\sqrt n}p^2
+o(n^{3/2}/\log n)\\
&=\left(2(1-\delta)-\frac23\alpha_{s,\eta}(1-\delta^3)+o(1)\right)
\frac{n^{3/2}}{\log n}.
\end{align*}
The sets $\cF_\delta$ and $[n]\setminus\cF_\delta$ partition $[n]$,
so concatenating their packings gives
$m(n)\le m_{\cF_\delta}(n)+m_{[n]\setminus\cF_\delta}(n)$. Adding
\eqref{eq:remainder} cancels the terms $2\delta$ and yields
\[
 \limsup_{n\to\infty}\frac{m(n)\log n}{n^{3/2}}
 \le2-\frac23\alpha_{s,\eta}(1-\delta^3).
\]
The same ordered limits $J\to\infty$, $s\to\infty$ and
$\eta\downarrow0$ give the upper constant $4/3$. Together with
$m(n)\ge m_{\cP(\sqrt n)}(n)$, this completes the proof of
\eqref{eq:full-bounded}.

We finally prove \eqref{eq:transition}. Fix $c\in(0,1)$ and assume
$c\sqrt n\le k<\sqrt n$. Since $\cP(k)\subseteq[k]$, monotonicity and
the uniform calculation following \eqref{eq:bounded-lower} give
$m_k(n)\ge \left(1-\frac{k^2}{3n}-o_c(1)\right)\frac{nk}{\log k}$.

For the upper bound, fix an integer $I\ge2$, in addition to $s$ and
$\eta$, and put $y=(\log k)^2$. Equation~\eqref{eq:smooth} with
$z=k$ and $\sigma=1/2$ gives
$\Psi(k,y)\le k^{1/2}\exp(O(\log k/\log\log k))=k^{1/2+o(1)}$.
Each $A_d$ fits by itself in a length-$n$ interval, so all differences
with $P^+(d)\le y$ have total cost
$n\Psi(k,y)=nk^{1/2+o(1)}=o(nk/\log k)$.

Every remaining $d\le k$ has a unique largest prime factor $p>y$ and
is divisible by $p$. If $y<p\le\sqrt k$, at most $\lfloor k/p\rfloor$
such differences occur. Divide them into at most
$\lceil k/p^2\rceil\le k/p^2+1$ groups of cardinality at most $p$.
By the divisibility assertion in Lemma~\ref{lem:regular-blocks}, each
group fits in length $n$. Their combined cost is at most
\[
 n\sum_{y<p\le\sqrt k}\left(\frac{k}{p^2}+1\right)
 =O\left(\frac{nk}{y}+n\sqrt k\right)=o(nk/\log k).
\]
For fixed $I$ and all sufficiently large $k$,
$y<\sqrt k<k/(I+1)$.
If $\sqrt k<p\le k/(I+1)$, all differences with largest prime factor
$p$ fit in a single length-$n$ interval, since their number is at most
$\lfloor k/p\rfloor<\sqrt k<p$.

It remains to pack the blocks with $k/(I+1)<p\le k$.
Here $h_p=\lfloor k/p\rfloor\le I$. For fixed $I$ and large $k$,
$I<p$, so every prime factor of $m\le h_p$ is smaller than $p$.
Consequently the differences whose largest prime factor is $p$ are
exactly $p,2p,\ldots,h_pp$.
To use complete geometric windows, set
$J=\lceil\log(I+1)/\log(1+\eta)\rceil$ and
$\vartheta=(I+1)^{1/J}-1\le\eta$.
The intervals $(P_j,(1+\vartheta)P_j]$, with
$P_j=k(1+\vartheta)^j/(I+1)$ and $0\le j<J$,
partition $(k/(I+1),k]$ exactly.
Use the partition into strongly regular tuples from
Lemma~\ref{lem:regular-blocks}, together with Lemma~\ref{lem:blocks}
with $\vartheta$ in place of $\eta$ and $H=I$.
Their hypotheses hold because $k<\sqrt n$ gives $n>k^2\ge p_1^2$,
and $I=o(R(P_j))$ as $k\to\infty$.
Use a separate length-$n$ interval for each exceptional block.
As above, the number of tuples times the error per tuple is
$O_{I,s,\eta}(k^{3-1/(s-1)})=o(k^3/\log k)$ over the finitely many
windows. The exceptional squared sums are also $o(k^3/\log k)$.
Since $k^2<n$, both are $o(nk/\log k)$. Also
$\vartheta\le\eta$ implies
$\alpha_{s,\vartheta}\ge\alpha_{s,\eta}$.

The length-$n$ base cost of all prime blocks is at most $n\pi(k)$;
the smooth and small-prime costs above are absorbed by
$o(nk/\log k)$. Thus
\begin{align*}
m_k(n)
&\le n\pi(k)-\alpha_{s,\eta}
\sum_{k/(I+1)<p\le k}p^2+o(nk/\log k)\\
&\le\left(1-\frac{k^2}{3n}\alpha_{s,\eta}
\bigl(1-(I+1)^{-3}\bigr)+o(1)\right)\frac{nk}{\log k}.
\end{align*}
In the last line we used, uniformly for $k\to\infty$,
\[
 n\pi(k)=(1+o(1))\frac{nk}{\log k},\qquad
 \sum_{k/(I+1)<p\le k}p^2
 =\left(\frac{1-(I+1)^{-3}}3+o(1)\right)
   \frac{k^3}{\log k}.
\]
All error terms here are uniform for $c\sqrt n\le k<\sqrt n$,
with $I,s,\eta,c$ fixed. Given $\eps>0$, choose $I,s$ sufficiently
large and then $\eta>0$ sufficiently small that
$\alpha_{s,\eta}(1-(I+1)^{-3})\ge1-\eps$.
Since $0<k^2/n<1$, this upper bound is at most
$\left(1-\frac{k^2}{3n}+\frac{\eps}{3}+o(1)\right) \frac{nk}{\log k}$.
Letting $n\to\infty$ and then $\eps\downarrow0$, and combining with
the uniform lower bound, proves \eqref{eq:transition}.
\end{proof}

\begin{proof}[Proof of Corollary~\ref{cor:bounded}]
For every $\sqrt n\le k\le n$, we have
$\cP(\sqrt n)\subseteq[k]\subseteq[n]$. Hence monotonicity gives
$m_{\cP(\sqrt n)}(n)\le m_k(n)\le m(n)$.
Both outer terms are
$(4/3+o(1))n^{3/2}/\log n$ by \eqref{eq:full-bounded}. Their error
terms do not depend on $k$, so the squeeze is uniform throughout the
stated range.
\end{proof}

\section{The equal-cardinality model}\label{sec:equal}

\begin{proof}[Proof of Theorem~\ref{thm:equal}]
Consider any packing of $(B_p)_{p\in\cP(n)}$. Its restriction to
$\cP(n-1)$ is phase-compatible, because distinct primes have greatest
common divisor $1$. Lemma~\ref{lem:compatible} gives
\begin{align*}
M_{\cP(n)}(n)
&\ge\sum_{p\le n-1}(np-p^2-p+1)\\
&=n\sum_{p\le n-1}p-\sum_{p\le n-1}p^2
  -\sum_{p\le n-1}p+\pi(n-1).
\end{align*}
By \eqref{eq:moments}, the first two sums are
$(1/2+o(1))n^3/\log n$ and $(1/3+o(1))n^3/\log n$, respectively,
while the remaining terms are $O(n^2/\log n)$. Thus
$M_{\cP(n)}(n)\ge \left(\frac16-o(1)\right)\frac{n^3}{\log n}$.

For the upper bound, fix $s\ge3$, $0<\eta<1$ and $J\in\N$, and put
$\delta=(1+\eta)^{-J}$. Partition $(\delta n,n]$ into the $J$ windows
$(P_j,(1+\eta)P_j]$, where
$P_j=\delta(1+\eta)^jn$, $0\le j<J$. In each window apply Lemma~\ref{lem:regular-blocks}, and pack every
strongly regular tuple by the $B$-family part of
Lemma~\ref{lem:blocks}. Pack each exceptional prime
and each prime at most $\delta n$ separately; $B_p$ itself fits in an
interval of length $np$.

For a window with lower endpoint $P$, there are $O(P/\log P)$ tuples,
each with error $O(P^2/R(P))$. Their total error is
$O_{s,\eta}(P^{3-1/(s-1)})=o(P^3/\log P)$. The exceptional squared sum is also $o(P^3/\log P)$. Since $J$ is fixed and
$P_j\asymp_{\delta,\eta}n$, the total error is $o(n^3/\log n)$.
Concatenation and the tuple savings give
\begin{align*}
M_{\cP(n)}(n)
&\le n\sum_{p\le n}p-\alpha_{s,\eta}\sum_{\delta n<p\le n}p^2
+o(n^3/\log n)\\
&=\left(\frac12-\frac{\alpha_{s,\eta}}3(1-\delta^3)+o(1)\right)
\frac{n^3}{\log n}.
\end{align*}
First take $n\to\infty$ with $s,\eta,J$ fixed; then let
$J\to\infty$, so $\delta\downarrow0$; finally let $s\to\infty$ and
$\eta\downarrow0$. Since $\alpha_{s,\eta}\to1$, the upper coefficient tends to
$1/2-1/3=1/6$, proving the asserted asymptotic for
$M_{\cP(n)}(n)$.

We next include composite differences through compatible subfamilies.
With the odd-prime sums $S_j$ defined in Section~\ref{sec:prelim},
we first establish the finite inequality
\begin{equation}\label{eq:parity}
M_k(n)\ge(n-2)S_1(K)-S_2(K)+\frac n2S_1(x)-\frac32S_2(x)
\end{equation}
for every integer $2\le K\le\min\{k,n-1\}$ and every real
$0\le x\le K/2$.

Fix an arbitrary packing $(\sigma_d+B_d)_{d\le k}$ into $[N]$. For
$\theta\in\{0,1\}$ and each odd prime $p\le K$, define
\[
 d_p^{(\theta)}=
 \begin{cases}
 2p,&p\le x\text{ and }\sigma_{2p}\equiv\theta\pmod2,\\
 p,&\text{otherwise}.
 \end{cases}
\]
If $p\le x$, then $2p\le K$; hence every selected difference lies in
$[K]\subseteq[n-1]$. The differences are distinct: an odd prime cannot
equal twice an odd prime, and $2p=2q$ implies $p=q$.

Let $D_\theta=\{d_p^{(\theta)}:p\le K, p\text{ odd}\}$. For distinct
odd primes $p,q$, the gcd of the selected differences is $1$, unless
both are $2p,2q$. In that case the gcd is $2$, and the corresponding
shifts both have parity $\theta$. Thus $D_\theta$ is phase-compatible.
With $F(d)=nd-d^2$, averaging over the two choices gives
\begin{align*}
\frac12\sum_{\theta=0}^1\sum_{d\in D_\theta}F(d)
&=\sum_{p\le K,\ p\text{ odd}}F(p)
+\frac12\sum_{p\le x,\ p\text{ odd}}\bigl(F(2p)-F(p)\bigr)\\
&=nS_1(K)-S_2(K)+\frac n2S_1(x)-\frac32S_2(x).
\end{align*}
For $p>x$ both choices contribute $F(p)$; for $p\le x$ exactly one
choice contributes $F(2p)$ and the other contributes $F(p)$. The last
equality follows from $F(2p)-F(p)=np-3p^2$.
Choose $\theta$ for which $\sum_{d\in D_\theta}F(d)$ is at least the
displayed average. Since $\sum_{d\in D_\theta}d\le2S_1(K)$,
Lemma~\ref{lem:compatible} gives
\begin{align*}
N&\ge\sum_{d\in D_\theta}(nd-d^2-d+1)\\
&\ge\sum_{d\in D_\theta}F(d)-2S_1(K)\\
&\ge(n-2)S_1(K)-S_2(K)+\frac n2S_1(x)-\frac32S_2(x).
\end{align*}
Since the original packing was arbitrary, this proves
\eqref{eq:parity}.

If $k\to\infty$ and $k=o(n)$, take $K=k$ and $x=k/2$.
For large $n$, $k\le n-1$, so this choice is admissible. By
\eqref{eq:moments},
\[
 S_1(k)=\left(\frac12+o(1)\right)\frac{k^2}{\log k},
 \qquad
 S_1(k/2)=\left(\frac18+o(1)\right)\frac{k^2}{\log k}.
\]
Moreover, $S_2(k),S_2(k/2)=O(k^3/\log k)$ and
$S_1(k)=O(k^2/\log k)$. Since $k/n\to0$, the negative terms
$2S_1(k)+S_2(k)+(3/2)S_2(k/2)$ are $o(nk^2/\log k)$. Hence
\eqref{eq:parity} gives
\[
 M_k(n)\ge
 \left(\frac12+\frac1{16}-o(1)\right)\frac{nk^2}{\log k}
 =\left(\frac9{16}-o(1)\right)\frac{nk^2}{\log k}.
\]

If $k/n\to\lambda\in(0,1)$, take $K=k$ and
$x=\min\{k/2,n/3\}$, so $x/n\to a=\min\{\lambda/2,1/3\}$.
Again $k\le n-1$ eventually. Since
$\log k\sim\log x\sim\log n$, \eqref{eq:moments} gives
\begin{align*}
S_1(k)&=\left(\frac{\lambda^2}{2}+o(1)\right)
       \frac{n^2}{\log n},&
S_2(k)&=\left(\frac{\lambda^3}{3}+o(1)\right)
       \frac{n^3}{\log n},\\
S_1(x)&=\left(\frac{a^2}{2}+o(1)\right)
       \frac{n^2}{\log n},&
S_2(x)&=\left(\frac{a^3}{3}+o(1)\right)
       \frac{n^3}{\log n}.
\end{align*}
The term $2S_1(k)=O(n^2/\log n)$ is $o(n^3/\log n)$.
Substitution in \eqref{eq:parity} yields
\[
 M_k(n)\ge\left(
 \frac{\lambda^2}{2}-\frac{\lambda^3}{3}
 +\frac{a^2}{4}-\frac{a^3}{2}-o(1)
 \right)\frac{n^3}{\log n}.
\]
Finally, $9/16>1/2$, and, for $0<a\le1/3$,
$a^2/4-a^3/2=a^2(1-2a)/4>0$.
Thus both predictions for $M_k(n)$ in
Conjecture~\ref{conj:equal-main} are false.
\end{proof}

\begin{proof}[Proof of Corollary~\ref{cor:equal}]
Fix $\lambda\in[2/3,1)$ and put $k_n=\lfloor\lambda n\rfloor$.
Then $k_n/n\to\lambda$ and $[k_n]\subseteq[n]$, so
$M(n)=M_n(n)\ge M_{k_n}(n)$. In Theorem~\ref{thm:equal},
$a=\min\{\lambda/2,1/3\}=1/3$. Hence
\[
 \liminf_{n\to\infty}\frac{M(n)\log n}{n^3}
 \ge\frac{\lambda^2}{2}-\frac{\lambda^3}{3}
   +\frac1{36}-\frac1{54}
 =\frac{\lambda^2}{2}-\frac{\lambda^3}{3}+\frac1{108}.
\]
This inequality holds for every fixed $\lambda<1$. Letting
$\lambda\uparrow1$ after taking the limit inferior gives
\[
 \liminf_{n\to\infty}\frac{M(n)\log n}{n^3}
 \ge\frac16+\frac1{108}=\frac{19}{108}.
\]
Since $19/108>1/6$, this also disproves the full-family assertion in
Conjecture~\ref{conj:equal-main}. No uniformity in $\lambda$ is
required.
\end{proof}

The sharp leading constants for $M_k(n)$ remain undetermined in these
regimes. The bound \eqref{eq:parity} uses only the pairs $p,2p$;
compatible selections among further small multiples of primes may
give stronger bounds.

\section*{Declaration on the use of AI}
The authors used generative AI tools to assist in discussing proof strategies, checking proofs, and improving exposition. All mathematical arguments, results, and conclusions were reviewed and verified by the authors.

\end{document}